\let\pa\partial
\let\na\nabla
\let\eps\varepsilon
\newcommand{\N}{{\mathbb N}}
\newcommand{\R}{{\mathbb R}}
\newcommand{\diver}{\operatorname{div}}
\newcommand{\dom}{{\mathcal O}}
\newcommand{\dd}{{\mathrm d}}
\newcommand{\E}{\mathbb{E}}
\newcommand{\F}{{\mathcal F}}
\newcommand{\Prob}{\mathbb{P}}
\newtheorem{theorem}{Theorem}
\newtheorem{lemma}[theorem]{Lemma}
\newtheorem{corollary}[theorem]{Corollary}
\newtheorem{definition}{Definition}
\begin{document}

\title[Martingale solutions to a cross-diffusion system]{Global martingale solutions 
to a segregation cross-diffusion system with stochastic forcing} 

\author[M. Biswas]{Mrinmay Biswas}
\address{Indian Institute of Technology Kanpur, Department of Mathematics and 
Statistics, Kanpur 208016, India}
\email{mbiswas@iitk.ac.in} 

\author[A. J\"ungel]{Ansgar J\"ungel}
\address{Institute of Analysis and Scientific Computing, Vienna University of  
	Technology, Wiedner Hauptstra\ss e 8--10, 1040 Wien, Austria}
\email{juengel@tuwien.ac.at} 

\date{\today}

\thanks{The authors acknowledge partial support from   
the Austrian Science Fund (FWF), grants P33010, W1245, and F65.
This work has received funding from the European 
Research Council (ERC) under the European Union's Horizon 2020 research and 
innovation programme, ERC Advanced Grant no.~101018153.} 

\begin{abstract}
The existence of a global martingale solution to a cross-diffusion system with
multiplicative Wiener noise in a bounded domain with no-flux boundary conditions
is shown. The model describes
the dynamics of population densities of different species due to segregation
cross-diffusion effects. The diffusion matrix is generally neither symmetric 
nor positive semidefinite. This difficulty is overcome by exploiting
the Rao entropy structure. The existence proof uses a stochastic Galerkin method,
uniform estimates from the Rao entropy inequality, 
and the Skorokhod--Jakubowski theorem.
Furthermore, an exponential equilibration result is proved for sufficiently
small Lipschitz constants of the noise by using the
relative Rao entropy. Numerical tests illustrate the behavior of solutions in
one space dimension for two and three population species.
\end{abstract}

\keywords{Population dynamics, cross-diffusion, martingale solutions, 
tightness of laws, large-time behavior of solutions.}  
 
\subjclass[2000]{60H15, 35R60, 35Q92.}

\maketitle


\section{Introduction}

The segregation of population species can be described by cross-diffusion systems
involving quadratic nonlinearities; see, e.g., \cite{BGHP85}. This class of models
was derived as the mean-field limit of moderately interacting particle systems
\cite{CDJ19}. In this paper, we analyze these systems taking into account the random
influence of the environment. We assume that the dynamics of the population density
$u_i$ of the $i$-th species is modeled by
\begin{equation}\label{1.eq}
  \dd u_i(t) = \diver\big(\delta\na u_i + u_i\na p_i(u)\big)\dd t
	+ \sum_{j=1}^n \sigma_{ij}(u)\dd W_j(t), \quad p_i(u) = \sum_{j=1}^n a_{ij}u_j,
\end{equation}
in a bounded domain $\dom\subset\R^d$ ($d\ge 1$)
with the initial and no-flux boundary conditions
\begin{equation}\label{1.bic}
  u_i(0)=u_i^0\quad\mbox{in }\dom, \quad
	\na u_i\cdot\nu=0\quad\mbox{on }\pa\dom,\ t>0,\ i=1,\ldots,n,
\end{equation}
where $\nu$ is the exterior unit normal vector to $\pa\dom$, 
$W_1,\ldots,W_n$ are cylindrical Wiener processes in a Hilbert space $U$
(see Section \ref{sec.main} for details),
and $\delta>0$, $a_{ij}\ge 0$ for 
$i,j=1,\ldots,n$. The density $u_i$ depends on the spatial variable
$x\in\dom$, the time $t\ge 0$, and the stochastic variable $\omega\in\Omega$. 
The terms $a_{ij}u_i \na u_j$ for $i\neq j$ are called cross-diffusion terms.


The analysis of equations \eqref{1.eq} is delicate 
already in the deterministic case, where
$\sigma_{ij}(u)=0$, since the diffusion matrix is generally neither symmetric
nor positive semi-definite. The key idea of the analysis is to exploit the
entropy structure associated to \eqref{1.eq}.

To explain this structure, we need two assumptions.
First, we suppose that the
diffusion matrix $(u_i a_{ij})\in\R^{n\times n}$ 
has only eigenvalues with positive real part (for $u_i>0$). 
This means that system \eqref{1.eq} is parabolic in the
sense of Petrovskii, which is a minimal condition for local solvability
\cite{Ama93}. Second, we assume the existence of numbers $\pi_i>0$
satisfying 
\begin{equation*}
  \pi_i a_{ij} = \pi_j a_{ji} \quad\mbox{for all }i,j=1,\ldots,n,
\end{equation*}
which is the detailed-balance condition for the Markov chain associated to 
$(a_{ij})$, and $(\pi_1,\ldots,\pi_n)$ is the corresponding reversible 
stationary measure. Both assumptions imply that the matrix $(\pi_ia_{ij})$ 
is symmetric positive definite.
We introduce the so-called Rao entropy 
\begin{equation}\label{1.H}
  H(u) = \int_\dom h(u)\dd x, \quad
	h(u) = \frac12\sum_{i,j=1}^n\pi_i a_{ij}u_iu_j,
\end{equation}
which is used as a diversity measure in population dynamics \cite{Rao82}.
Observe that the entropy density $h(u)$ is not the sum of the individual entropies
of the species, but it mixes the species.
A formal computation, which is made
rigorous below (see Lemma \ref{lem.eiN}), shows that
\begin{equation}\label{1.ei}
  \frac{\dd}{\dd t}\E H(u(t)) + \delta\sum_{i,j=1}^n\pi_i a_{ij}\E\int_\dom
	\na u_i\cdot\na u_j\dd x + \sum_{i=1}^n\pi_i\E\int_\dom u_i|\na p_i(u)|^2 \dd x = 0.
\end{equation}
Since $(\pi_ia_{ij})$ is positive definite and $\delta>0$, this yields
an a priori estimate for $\na u_i$ in $L^2(\dom)$. The last integral can be
interpreted as the kinetic energy of the system, with $\na p_i(u)$ being the
partial velocity. Unfortunately, it does not provide any gradient estimate, and
this is the reason why we have included the $\delta$-terms. 
The main results of this paper are the existence of a global martingale solution
to \eqref{1.eq}--\eqref{1.bic} and the exponential decay in expectation of the
solution to its spatial average. These results are detailed in Section \ref{sec.main}.

The analysis of stochastic cross-diffusion systems is rather recent. 
The existence of martingale solutions to cross-diffusion systems with a
positive definite diffusion matrix (including nonlocal diffusion) was shown in 
\cite{BeKa22}. A stochastic population cross-diffusion system with cubic
nonlinearities was investigated in \cite{DJZ19}. 
In these works, the quadratic
energy structure allows for the use of a stochastic Galerkin method. 
Combining the theory of quasilinear parabolic equations with evolution semigroup
methods, the authors of \cite{KuNe20} 
proved the existence of a unique local pathwise mild
solution to the Shigesada--Kawasaki--Teramoto system with stochastic forcing. 

For the existence analysis of system \eqref{1.eq}, we use similar techniques
as in \cite{DJZ19}. The large-time behavior result is based on the relative entropy
method (which becomes here a weighted $L^2$ norm). Both results are new.

The random influence may also be taken into account on the level of the fluxes,
using a conservative noise of the type $\sum_{j=1}^n\diver\sigma_{ij}(u)\dd W_j$.
This noise can be handled by considering the noise in $H^{-1}(\dom)$ and estimating
in the norm of that space \cite{Cio20}. Writing the noise in Stratonovich form
and assuming that $\diver(\sigma(u)/u)=0$,
the conservative noise can be reformulated as an It\^o noise plus a regularizing
Laplacian term; see \cite{MaTo21}. The authors of \cite{FeGe19} pass to the
kinetic formulation of the equation, which yields
an equation in which the noise enters as a linear transport.
Unfortunately, these techniques cannot be easily applied to equations
\eqref{1.eq} with arbitrary conservative noise.



\section{Notation and main results}\label{sec.main}

Let $\dom\subset\R^d$ ($d\ge 1$) be a bounded domain and
let $(\Omega,\F,\mathbb{F},\Prob)$ be a filtered probability space endowed with a
complete right-continuous filtration $\mathbb{F}=(\F_t)_{t\ge 0}$.
We write $L^p(\Omega,\F;B)$ or simpler $L^p(\Omega;B)$ 
for the set of all $\F$-measurable $B$-valued
random variables in a Banach space $B$, such that 
$\E\|u\|_B^p=\int_\Omega\|u(\omega)\|_B^p\Prob(\dd\omega)
<\infty$ ($1\le p<\infty$). 
Let $U$ be a Hilbert space with orthonormal basis
$(\eta_k)_{k\in\N}$. The space of Hilbert--Schmidt operators from $U$ to $L^2(\dom)$ 
is defined by
$$
  \mathcal{L}_2(U;L^2(\dom)) = \bigg\{F:U\to L^2(\dom)\mbox{ linear, continuous}:
	\sum_{k=1}^\infty\|F\eta_k\|_{L^2(\dom)}<\infty\bigg\},
$$
endowed with the norm $\|F\|_{\mathcal{L}_2(U;L^2(\dom))}
=(\sum_{k=1}^\infty\|F\eta_k\|_{L^2(\dom)}^2)^{1/2}$. Let $\beta_{jk}$ for
$j=1,\ldots,n$, $k\in\N$ be independent one-dimensional Brownian motions.
There exists a Hilbert space $U_0\supset U$
and a Hilbert--Schmidt embedding $J:U\to U_0$ such that the series
$W_j=\sum_{k=1}^\infty\beta_{jk}J(\eta_k)$ converges in 
$L^\infty(0,T;L^2(\Omega; U_0))$. Moreover, 
$W_j(\omega)\in C^0([0,T];U_0)$ for a.e.\ $\omega\in\Omega$
\cite[Prop.~2.5.2]{LiRo15}.

We impose the following hypotheses:

\begin{itemize}
\item[(H1)] Domain: Let $\dom\subset\R^d$ ($1\le d\le 3$) be a bounded domain with
Lipschitz boundary.
\item[(H2)] Initial datum: $u^0_i\in L^2(\Omega,\F_0;L^2(\dom))$ satisfies
$u_i^0\ge 0$ a.e.\ in $\dom$, $\Prob$-a.s.\ for $i=1,\ldots,n$.
\item[(H3)] Diffusion coefficients: All eigenvalues of $A=(a_{ij})\in\R^{n\times n}$ 
have positive real parts and the detailed-balance
condition holds, i.e., there exist $\pi_1,\ldots,\pi_n>$ such that
$$
  \pi_i a_{ij} = \pi_j a_{ji} \quad\mbox{for all }i,j=1,\ldots,n.
$$
\item[(H4)] Noise coefficients: $\sigma_{ij}:L^2(\dom)\times\Omega\to 
\mathcal{L}_2(U;L^2(\dom))$ is $\mathcal{B}(L^2(\dom)\otimes\F/
\mathcal{B}(\mathcal{L}_2(U;L^2(\dom)))$-measurable and $\mathbb{F}$-adapted,
and there exists $C_\sigma>0$ such that for all $i,j=1,\ldots,n$ and
$u,v\in L^2(\dom)$,
\begin{align*}
  \|\sigma_{ij}(u)-\sigma_{ij}(v)\|_{\mathcal{L}_2(U;L^2(\dom))}
	&\le C_\sigma\|u-v\|_{L^2(\dom)}, \\
	\|\sigma_{ij}(u)\|_{\mathcal{L}_2(U;L^2(\dom))} 
	&\le C_\sigma\|u_i\|_{L^2(\dom)}.
\end{align*}
\end{itemize}

Let us comment on these hypotheses. 
The restriction $d\le 3$ is not essential; our proof works in any space dimension,
but we need to choose a larger space to show the Aldous condition, needed for the 
tightness of the laws of the Galerkin solutions.
Let $P=\operatorname{diag}(\pi_1,\ldots,\pi_n)$.
Then, by Hypothesis (H3), $PA$ is symmetric. 
Since the eigenvalues of $PA$ are positive, we conclude that $PA$ is
positive definite.
The linear growth condition in Hypothesis (H4) is needed to prove the
$\Prob$-a.s.\ nonnegativity of $u_i$. It can be weakened to
$\|\sigma_{ij}(u)\|_{\mathcal{L}_2(U;L^2(\dom))}\le C_\sigma(1+\|u\|_{L^2(\dom)})$,
but then we cannot prove the nonnegativity property.

\begin{definition}[Martingale solution]
Let $T>0$. A {\em global martingale solution} to \eqref{1.eq}--\eqref{1.bic} is a tuple
$(\widetilde V,\widetilde W,\widetilde u)$ such that
\begin{itemize}
\item $\widetilde V = (\widetilde\Omega,\widetilde{\mathcal{F}},\widetilde\Prob,
\widetilde{\mathbb{F}})$
is a complete probability space with a filtration
$\widetilde{\mathbb{F}}=(\widetilde{\mathcal{F}}_t)_{t\in[0,T]}$;
\item $\widetilde W=(\widetilde W_1,\ldots,\widetilde W_n)$ 
is a cylindrical Wiener process in $U^n:=U\times\cdots\times U$;
\item $\widetilde u=(\widetilde u_1,\ldots,\widetilde u_n):[0,T]\times\Omega
\to L^2(\dom)$ is an $\mathbb{F}$-progressively measurable process such that
for all $t\in[0,T]$ and $i=1,\ldots,n$,
$$
  \widetilde u_i\in L^2(\Omega;C^0([0,T];L_w^2(\dom)))\cap 
	L^2(\Omega;L^2(0,T;H^1(\dom))),
$$
the law of $\widetilde u_i(0)$ is the same as for $u_i^0$, and $\widetilde u_i$
satisfies for $t\in [0,T]$ and $\phi\in H^1(\dom)$,
\begin{align*}
  (\widetilde u_i(t),\phi)_{L^2(\dom)} &= (\widetilde u_i(0),\phi)_{L^2(\dom)}
	- \int_0^t\int_\dom\big(\delta\na\widetilde u_i(s) + \widetilde u_i(s)
	\na p_i(\widetilde u(s))\big)\cdot\na\phi \dd x\dd s \\
	&\phantom{xx}{}-\int_0^t\bigg(\sum_{j=1}^n\sigma_{ij}(\widetilde u(s))
	\dd \widetilde W_j(s),\phi\bigg)_{L^2(\dom)}\quad\Prob\mbox{-a.s.}
\end{align*}
\end{itemize}
\end{definition}

Our first main result is as follows.

\begin{theorem}[Existence of a global martingale solution]\label{thm.ex}
Let $T>0$ and let Hypotheses (H1)--(H4) hold. Then there exists a global
martingale solution $(\widetilde U,\widetilde W,\widetilde u)$ to 
\eqref{1.eq}--\eqref{1.bic} satisfying $\widetilde u_i(t)\ge 0$ a.e.\ in $\dom$,
$\widetilde \Prob$-a.s.\ for all $t\in[0,T]$, $i=1,\ldots,n$.
\end{theorem}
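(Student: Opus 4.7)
The plan is to follow the standard stochastic Galerkin plus tightness plus Skorokhod--Jakubowski program, with the Rao entropy playing the role of the usual $L^2$ energy since the raw diffusion matrix $(u_i a_{ij})$ lacks symmetry and positivity.

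First I would set up a Galerkin approximation using the eigenfunctions of the Neumann Laplacian on $\dom$, letting $\Pi_N$ denote the $L^2$-projection onto the span of the first $N$ of them. The Galerkin system
\begin{equation*}
  \dd u_i^N = \Pi_N\diver\bigl(\delta\na u_i^N + u_i^N\na p_i(u^N)\bigr)\dd t + \Pi_N\sum_{j=1}^n\sigma_{ij}(u^N)\dd W_j
\end{equation*}
is a finite-dimensional It\^o SDE with locally Lipschitz coefficients, so local existence is classical. Global existence and uniform-in-$N$ bounds follow by applying It\^o's formula to the Rao entropy $H(u^N)$: the drift reproduces the dissipation identity \eqref{1.ei}, and the It\^o correction involving $\sum_{i,j}\pi_i a_{ij}\|\sigma_{ij}(u^N)\|_{\mathcal{L}_2(U;L^2(\dom))}^2$ is controlled by $C\|u^N\|_{L^2(\dom)}^2$ via (H4). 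Gronwall's lemma together with Burkholder--Davis--Gundy then yields
\begin{equation*}
  \E\sup_{t\in[0,T]}\|u^N(t)\|_{L^2(\dom)}^{2p} + \E\biggl(\int_0^T\|\na u^N\|_{L^2(\dom)}^2\dd t\biggr)^{p} \le C(p,T)
\end{equation*}
for every $p\ge 1$; the higher moments are needed for uniform integrability at the limit passage.

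Second I would establish tightness of the laws of $(u^N)$ in a path space that accommodates the quadratic nonlinearity, such as $L^2(0,T;L^2(\dom))\cap C^0([0,T];H^{-s}(\dom))$ for some $s>d/2$. Spatial compactness follows from the uniform $L^2(0,T;H^1(\dom))$-bound and Rellich; temporal compactness is obtained via an Aldous condition, estimating the drift in a negative Sobolev norm (here is where the Sobolev embedding $H^1\hookrightarrow L^4$ available for $d\le 3$ enters, so that the product $u_i^N\na p_i(u^N)$ can be controlled) and estimating the stochastic increments with BDG in $H^{-s}(\dom)$. The Skorokhod--Jakubowski theorem then yields a new probability space $\widetilde V$ supporting processes $\widetilde u^N,\widetilde W^N$ with the same laws, such that $\widetilde u^N\to\widetilde u$ strongly in $L^2(0,T;L^2(\dom))$ and $\widetilde W^N\to\widetilde W$ in $C^0([0,T];U_0^n)$, $\widetilde\Prob$-almost surely. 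Passing to the limit in the linear and Laplacian terms is routine by weak $L^2(0,T;H^1)$ convergence; the nonlinear drift $\widetilde u_i^N\na p_i(\widetilde u^N)$ is handled by combining the strong $L^2L^2$ convergence of $\widetilde u^N$ with the weak $L^2L^2$ convergence of $\na \widetilde u^N$; and the stochastic integral is identified by the classical martingale argument using Lipschitz continuity of $\sigma_{ij}$ and the uniform $L^{2p}$-bounds. Finally, $\widetilde\Prob$-a.s.\ nonnegativity of $\widetilde u_i$ is obtained by applying It\^o's formula to a regularization of $(\widetilde u_i^-)^2$: the $\delta$-term has the favorable sign, the cross-diffusion flux $\widetilde u_i\na p_i(\widetilde u)$ vanishes on $\{\widetilde u_i=0\}$, and the It\^o correction is bounded by $C_\sigma^2\|\widetilde u_i^-\|_{L^2(\dom)}^2$ thanks to the sharp form $\|\sigma_{ij}(u)\|_{\mathcal{L}_2}\le C_\sigma\|u_i\|_{L^2(\dom)}$ in (H4), so Gronwall closes the argument.

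The main obstacle I anticipate is the tightness/identification step for the cross-diffusion drift: one must choose the path space delicately, strong enough that products $u_i\na u_j$ pass to the limit yet weak enough that the Aldous time-regularity estimate succeeds against the nonlinear flux. This is precisely where the dimensional restriction (H1) and the $\delta$-regularization appear indispensable—without them the $L^2(0,T;H^1)$-bound from the Rao entropy would not suffice to control products in a negative Sobolev space uniformly in $N$.
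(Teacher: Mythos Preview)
Your overall architecture matches the paper's, but there is a genuine gap at the very first step that would cause the entropy estimate to fail. In the Galerkin system you write the cross-diffusion flux as $u_i^N\na p_i(u^N)$, exactly as in the continuous problem. The paper instead replaces it by $(u_i^N)^+\na p_i(u^N)$, and this is not cosmetic. The Galerkin projection $\Pi_N$ destroys pointwise nonnegativity, so $u_i^N$ will in general take negative values. When you carry out the It\^o--entropy computation, the cross-diffusion contribution becomes
\[
  -\sum_{i=1}^n \pi_i \int_0^t\int_\dom u_i^N(s)\,|\na p_i(u^N(s))|^2\,\dd x\,\dd s,
\]
and without $u_i^N\ge 0$ this term has no sign and cannot be discarded. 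Nor can it be absorbed: you would need an $L^\infty$ bound on $(u_i^N)^-$ or on $|\na p_i(u^N)|^2$, neither of which is available at this stage. Consequently the Gronwall loop for $\E\sup_t H(u^N(t))$ does not close, and the uniform $L^2(0,T;H^1)$ bound---on which your entire tightness and limit-passage argument rests---is lost. With the truncation $(u_i^N)^+$ in place, the offending integrand is manifestly nonnegative and can simply be dropped.

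The same truncation is what makes your nonnegativity argument at the end actually go through. You write that ``the cross-diffusion flux $\widetilde u_i\na p_i(\widetilde u)$ vanishes on $\{\widetilde u_i=0\}$'', but the Stampacchia test with $\widetilde u_i^-$ requires more: one picks up $\int_\dom \widetilde u_i\,\na\widetilde u_i^-\cdot\na p_i(\widetilde u)\,\dd x = -\tfrac12\int_\dom \na(\widetilde u_i^-)^2\cdot\na p_i(\widetilde u)\,\dd x$, which involves $\Delta p_i(\widetilde u)$ after integration by parts and has no reason to be controlled by $\|\widetilde u_i^-\|_{L^2}^2$. By contrast, if the limit equation carries the truncated flux $\widetilde u_i^+\na p_i(\widetilde u)$ (inherited from the approximation), the cross term is $\int_\dom \widetilde u_i^+\,\na\widetilde u_i^-\cdot\na p_i\,\dd x=0$ because $\widetilde u_i^+$ and $\na\widetilde u_i^-$ have disjoint supports; then Gronwall on $\E\|\widetilde u_i^-\|_{L^2}^2$ yields $\widetilde u_i\ge 0$, and a posteriori $\widetilde u_i^+=\widetilde u_i$ recovers the original equation. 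So the positive-part truncation is the missing ingredient both for the a priori bound and for the sign.
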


As mentioned in the introduction, the idea of the proof is the use of the stochastic
Galerkin method combined with the entropy method. We project
equations \eqref{1.eq} on a Galerkin space with finite dimension $N\in\N$. The existence
of a pathwise unique strong solution $u^N=(u_1^N,\ldots,u_n^N)$ 
(up to some stopping time) is shown by means of 
Banach's fixed-point theorem. It\^{o}'s lemma allows us to derive an entropy
inequality; see \eqref{1.ei}. 
Since we introduced the $\delta$-terms, we obtain estimates for $u^N$ in
$H^1(\dom)$ uniformly in $N$. 

The tightness of the laws of $(u^N)$ in the topological
space $Z_T$, defined in \eqref{3.ZT}, is proved by applying the criterion of
\cite{ZGJ13}. We deduce from Skorokhod--Jakubowski's theorem that there exists
a subsequence of $(u^N)$,
another probability space, and random variables $(\widetilde{u}^N,\widetilde{W}^N)$
having the same law as $(u^N,W^N)$, and $(\widetilde{u}^N,\widetilde{W}^N)$
converges to $(\widetilde{u},\widetilde{W})$ in the topology of $Z_T$.
Because of the gradient estimates and compactness, we infer the strong convergence
$\widetilde{u}^N\to\widetilde{u}$ in $L^2(\dom)$ a.s., and we can identify
the limits in the nonlinearities. Finally, the a.s.\ nonnegativity follows
from a stochastic Stampacchia truncation argument from \cite{CPT16}.

For the second main result, we introduce the relative Rao entropy
$$
  H(u|\bar{u}) 
	= H(u) - H(\bar{u}) - \frac{\delta H}{\delta u}(\bar{u})\cdot(u-\bar{u})
	= \frac12\sum_{i,j=1}^n\pi_i a_{ij}\int_\dom(u_i-\bar{u}_i)(u_j-\bar{u}_j)\dd x,
$$
where $\bar{u}$ is the solution to
\begin{equation*}
  \dd\bar{u}_i(t) = \sum_{j=1}^n\bar\sigma_{ij}(u(t))\dd W_j(t), \quad t>0, \quad
	\bar\sigma_{ij}(u) = \frac{1}{|\dom|}\int_\dom
	\sigma_{ij}(u)\dd x,
\end{equation*}
and $|\dom|$ is the measure of $\dom$.
Recalling $P=\operatorname{diag}(\pi_1,\ldots,\pi_n)$ and $A=(a_{ij})\in\R^{n\times n}$,
the relative entropy can be written as the weighted $L^2$ norm
$$
  H(u|\bar{u}) = \frac12\|(PA)^{1/2}(u-\bar{u})\|_{L^2(\dom)}^2,
$$

\begin{theorem}[Exponential time decay]\label{thm.time}
Let Hypotheses (H1)--(H4) hold and let $u$ be a martingale solution to
\eqref{1.eq}--\eqref{1.bic}. Then there exists $c_0>0$ such that for all
$0<C_\sigma<c_0$ (see Hypothesis (H4)),
$$
  \E H(u(t)|\bar{u}(t)) \le \E H(u^0|\bar{u}(0))e^{-\eta t}, \quad
	\eta := c_0^2-C_\sigma^2,\ t>0.
$$
\end{theorem}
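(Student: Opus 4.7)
I would apply It\^o's formula to $H(u|\bar u)=\frac12\|(PA)^{1/2}(u-\bar u)\|_{L^2(\dom)}^2$, take expectations, and derive a linear differential inequality $\frac{\dd}{\dd t}\E H(u(t)|\bar u(t))\le-\eta\,\E H(u(t)|\bar u(t))$ with $\eta=c_0^2-C_\sigma^2$, to which Gronwall's lemma applies. The key preliminary observation is that $\bar u_i(t)$ is spatially constant and coincides with the spatial mean of $u_i(t)$: integrating \eqref{1.eq} over $\dom$ and using the no-flux boundary condition eliminates the divergence term, yielding $\dd\int_\dom u_i\,\dd x=|\dom|\sum_j\bar\sigma_{ij}(u)\,\dd W_j$, which is precisely the SDE driving $|\dom|\bar u_i$. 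Consequently $v_i:=u_i-\bar u_i$ has zero spatial mean and $\na\bar u_i\equiv 0$; the former is essential to invoke Poincar\'e--Wirtinger and the latter so that the It\^o drift coincides with the deterministic entropy dissipation of \eqref{1.ei}.

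\textbf{It\^o computation.} Since $\bar u$ contributes no drift and no gradient, $v_i$ satisfies
\[
\dd v_i=\diver\bigl(\delta\na u_i+u_i\na p_i(u)\bigr)\,\dd t+\sum_{j=1}^n\bigl(\sigma_{ij}(u)-\bar\sigma_{ij}(u)\bigr)\,\dd W_j.
\]
Applying It\^o to $F(v)=\frac12\int_\dom v^T PA\,v\,\dd x$ and integrating by parts (boundary terms vanish by the no-flux condition), the drift reproduces the deterministic entropy dissipation
\[
-\delta\sum_{i,j=1}^n\pi_ia_{ij}\int_\dom\na u_i\cdot\na u_j\,\dd x-\sum_{i=1}^n\pi_i\int_\dom u_i|\na p_i(u)|^2\,\dd x,
\]
while the It\^o correction is $\frac12\sum_{i,j,k}\pi_ia_{ij}\langle\sigma_{ik}(u)-\bar\sigma_{ik}(u),\sigma_{jk}(u)-\bar\sigma_{jk}(u)\rangle_{\mathcal{L}_2(U;L^2(\dom))}$. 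After taking expectations, the stochastic integral drops out and the nonnegative kinetic term is discarded. Positive definiteness of $PA$ together with the Poincar\'e--Wirtinger inequality for the zero-mean $v_i$ yields $\delta\sum_{i,j}\pi_ia_{ij}\int\na u_i\cdot\na u_j\,\dd x\ge 2c_0^2\,H(u|\bar u)$ for a constant $c_0>0$ depending on $\delta$, the spectrum of $PA$, and the Poincar\'e constant of $\dom$. Bounding the It\^o correction from above by $2C_\sigma^2\,H(u|\bar u)$ then produces $\frac{\dd}{\dd t}\E H(u|\bar u)\le-(c_0^2-C_\sigma^2)\,\E H(u|\bar u)$, and Gronwall concludes.

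\textbf{Main obstacle.} The delicate step is the upper bound on the It\^o correction, which reduces to estimating $\|\sigma_{ij}(u)-\bar\sigma_{ij}(u)\|_{\mathcal{L}_2(U;L^2(\dom))}$ by a constant multiple of $C_\sigma\|u-\bar u\|_{L^2(\dom)}$. The natural tactic exploits linearity of averaging: the difference $\sigma_{ij}(u)-\bar\sigma_{ij}(u)-\bigl(\sigma_{ij}(\bar u)-\bar\sigma_{ij}(\bar u)\bigr)$ is the zero-mean projection of $\sigma_{ij}(u)-\sigma_{ij}(\bar u)$ and thus has $\mathcal{L}_2$-norm at most $C_\sigma\|u-\bar u\|_{L^2}$ by (H4); the residual $\sigma_{ij}(\bar u)-\bar\sigma_{ij}(\bar u)$ vanishes provided $\sigma_{ij}$ maps spatially constant inputs to spatially constant outputs, a mild structural hypothesis valid in the typical Nemytskii setting. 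A secondary subtlety is that the \emph{same} constant $c_0$ must simultaneously lower-bound the dissipation (via Poincar\'e and the ratio $\lambda_{\min}(PA)/\lambda_{\max}(PA)$) and absorb the noise term, so $c_0$ is pinned down by the geometry of $\dom$ and the spectral data of $PA$, and only the regime $C_\sigma<c_0$ yields exponential contraction.
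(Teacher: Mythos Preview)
Your proposal is correct and follows essentially the same strategy as the paper: apply It\^o's formula to the relative entropy, use positive definiteness of $PA$ together with the Poincar\'e--Wirtinger inequality on the dissipation term, bound the It\^o correction via the Lipschitz condition (H4), and close with a Gronwall-type argument (the paper builds the factor $e^{\eta t}$ directly into the It\^o formula, which is equivalent to your Gronwall step). Your handling of the correction term via the zero-mean projection is slightly more streamlined than the paper's double-integral/Cauchy--Schwarz manipulation, and you are right to flag explicitly the structural hypothesis that $\sigma_{ij}$ maps spatially constant inputs to spatially constant outputs---the paper uses this tacitly when it treats $\sigma_{ij}(\bar u(s))\eta_k$ as a ``space-independent term''.
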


Usually, the large-time behavior of solutions to stochastic differential equations
is analyzed by proving the existence and uniqueness of an invariant measure
and studying the ergodicity of the equations. This can be done, for instance, 
by establishing the strong Feller property and the tightness of the laws of the
solutions and by applying the Krylov--Bogolyubov's theorem \cite[Chapter 11]{DaZa14}.
Here, this program is delicate since we are lacking semigroup properties.
Therefore, we rely on estimates from the entropy method.


For the proof of Theorem \ref{thm.time}, we apply It\^{o}'s lemma to the
process $e^{-\eta t/2}(PA)^{1/2}(u-\bar{u})$, take the expectation, and
use the Poincar\'e--Wirtinger inequality with constant $C_P>0$
for the gradient term. This leads to (see Section \ref{sec.time})
\begin{align}
  e^{\eta t}&\E H(u(t)|\bar{u}(t)) - \E H(u(0)|\bar{u}(0)) - \eta\int_0^t e^{\eta s}
	\E H(u(s)|\bar{u}(s))\dd s \nonumber \\
	&\le -\frac{\delta\lambda}{C_P^2}\int_0^t\|u-\bar{u}\|_{L^2(\dom)}^2\dd s
	+ \frac12\E\int_0^t\|(PA)^{1/2}(\sigma(u)-\bar{\sigma}(u))
	\|_{\mathcal{L}_2(U;L^2(\dom))}^2\dd s. \label{1.aux}
\end{align}
The difficult part of the proof is the estimate of the last integral.
Using the properties of $\sigma_{ij}(u(t))$ and $\bar{\sigma}_{ij}(u(t))$,
we find that
$$
  \E\int_0^t\|(PA)^{1/2}(\sigma(u)-\bar{\sigma}(u))
	\|_{\mathcal{L}_2(U;L^2(\dom))}^2\dd s 
	\le 2C_1C_\sigma^2\|u-\bar{u}\|_{L^2(\dom)}^2.
$$
Thus choosing $C_\sigma>0$ (see Hypothesis (H4)) and $\eta>0$
sufficiently small, this integral can be absorbed by the first term on the
right-hand side of \eqref{1.aux}, and we conclude the result.


\section{Proof of Theorem \ref{thm.ex}}

The proof of Theorem \ref{thm.ex} is split into several steps.

\subsection{Stochastic Galerkin approximation}

Let $(e_k)_{k\in\N}$ be an orthonormal basis of $L^2(\dom)$, which is 
orthogonal in $H^1(\dom)$. 
For each $N\in\N$,
define the finite-dimensional subspace $H_N:=\operatorname{span}(e_1,\ldots,e_N)$
of $L^2(\dom)$ and the corresponding projection $\Pi_N:L^2(\dom)\to H_N$,
$\Pi_N(v) = \sum_{i=1}^N(v,e_i)_{L^2(\dom)}e_i$ for $v\in L^2(\dom)$.
We consider system \eqref{1.eq} projected on the subspace 
$H_N^n:=H_n\times\cdots\times H_N$:
\begin{align}\label{2.eqN}
  \dd u_i^N(t) &= \Pi_N\diver\big(\delta\na u_i^N 
	+ (u_i^N)^+\na p_i(u^N)\big)\dd t
	+ \Pi_N\sum_{j=1}^n\sigma_{ij}(u^N)\dd W_j(t), \quad t>0, \\
	u_i^N(0) &= \Pi_N(u_i^0), \quad i=1,\ldots,n, \label{2.icN}
\end{align}
where $z^+=\max\{0,z\}$ denotes the positive part. This truncation is
necessary, since the Rao entropy does not provide nonnegative densities. The
nonnegativity of the Galerkin limit $u_i$ is proved in Section \ref{sec.end}.
Given $T>0$, we introduce the space $M_T:=L^2(\Omega;C^0([0,T];H_N^n))$ with the
norm $\|u\|_{M_T}=(\E\sup_{0<t<T}\|u(t)\|_{L^2(\dom)}^2)^{1/2}$ for $u\in M_T$.
For given $R>0$ and $u\in M_T$, we define the exit time $\tau_R=\inf\{t\in[0,T]:
\|u(t)\|_{H^1(\dom)}>R\}$. Then $\{\omega\in\Omega:\tau_R(\omega)>t\}$ belongs
to $\F_t$ for every $t\in[0,T]$, and $\tau_R$ is an $\mathbb{F}$-stopping time.

\begin{lemma}[Local existence of $u^N$]\label{lem.ex.uN}
Let $T>0$, $R>0$ and let the Hypotheses (H1)--(H4) hold. 
Then there exists a pathwise unique strong solution $u^N\in M_{T\wedge\tau_R}$ to 
\eqref{2.eqN}--\eqref{2.icN} such that for any $t\in[0,T\wedge\tau_R]$,
$i=1,\ldots,n$, and $\phi=(\phi_1,\ldots,\phi_n)\in H_N^n$,
\begin{align*}
  (u_i^N(t),\phi_i)_{L^2(\dom)} &= (u_i(0),\phi_i)_{L^2(\dom)}
	- \int_0^t\int_\dom\big(\delta\na u_i^N(s) + (u_i^N)^+(s)\na p_i(u^N(s))\big)
	\cdot\na\phi_i\dd x\dd s \\
	&\phantom{xx}{}
	+ \bigg(\sum_{j=1}^n\int_0^t\sigma_{ij}(u^N(s))\dd W_j(s),\phi_i\bigg)_{L^2(\dom)}
	\quad\Prob\mbox{-a.s.}
\end{align*}
\end{lemma}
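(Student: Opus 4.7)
The plan is to recast \eqref{2.eqN}--\eqref{2.icN} as a finite-dimensional It\^o equation in $\R^{nN}$ and invoke the classical existence and uniqueness theory for SDEs with Lipschitz coefficients, using a cutoff at level $R$ that becomes transparent below the stopping time $\tau_R$. Concretely, writing $u_i^N(t)=\sum_{k=1}^N c_{ik}^N(t)\,e_k$ and testing \eqref{2.eqN} against $e_\ell$, I obtain an It\^o system for $c^N=(c_{ik}^N)\in\R^{nN}$ with drift
\[
F_{i\ell}(c^N) = -\int_\dom\bigl(\delta\na u_i^N + (u_i^N)^+\na p_i(u^N)\bigr)\cdot\na e_\ell\,\dd x
\]
and diffusion coefficient $G_{i\ell,jk}(c^N)=(\sigma_{ij}(u^N)\eta_k, e_\ell)_{L^2(\dom)}$. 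Because $H_N$ is finite-dimensional, all Sobolev norms on it are equivalent, so $F$ is \emph{locally} Lipschitz in $c^N$ (the nonlinearity $(u_i^N)^+\na p_i(u^N)$ being quadratic in $u^N$), while $G$ is globally Lipschitz with linear growth by Hypothesis~(H4).

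To globalize the drift I would introduce a smooth cutoff $\chi_R:\R_{\ge 0}\to[0,1]$ with $\chi_R\equiv 1$ on $[0,R]$ and supported in $[0,2R]$, and replace $F$ by $F_R(c^N):=\chi_R(\|u^N\|_{H^1(\dom)})F(c^N)$. The truncated system now has globally Lipschitz coefficients with linear growth, so the classical existence and uniqueness theorem for finite-dimensional It\^o SDEs (see, e.g., \cite{LiRo15}) delivers a pathwise unique strong solution $c^{N,R}\in L^2(\Omega;C^0([0,T];\R^{nN}))$, i.e., a process $u^{N,R}\in M_T$.

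It remains to undo the truncation. Since $t\mapsto\|u^{N,R}(t)\|_{H^1(\dom)}$ is continuous, $\tau_R$ is an $\mathbb{F}$-stopping time by right-continuity of $\mathbb{F}$, and on $[0,\tau_R]$ the cutoff $\chi_R$ equals $1$; consequently $u^N:=u^{N,R}|_{[0,\tau_R]}$ solves \eqref{2.eqN}--\eqref{2.icN}. Pathwise uniqueness on $[0,\tau_R]$ follows by applying It\^o's formula to $\|u^N-\widetilde u^N\|_{L^2(\dom)}^2$ for two solutions and combining the Lipschitz bounds of $F_R$ and $G$ with Gr\"onwall's lemma. Testing against $\phi_i\in H_N$ and exploiting the self-adjointness of $\Pi_N$ then yields the weak formulation stated in the lemma. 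The main obstacle is precisely the non-Lipschitz, quadratic nature of the drift, which the $R$-truncation neutralizes; beyond that, only routine verifications remain.
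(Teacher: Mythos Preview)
Your approach is correct and takes a genuinely different route from the paper's. The paper argues by Banach's fixed-point theorem directly on the space $M_{T\wedge\tau_R}$: it defines the solution operator $S(v)$, shows that $S$ is a self-map and then a contraction for sufficiently small $T$ by estimating terms $I_1,\ldots,I_4$ (self-map) and $I_5,I_6,I_7$ (contraction), exploiting throughout the norm equivalence on $H_N$ and the bound $\|\na v(s)\|_{L^2(\dom)}\le R$ for $s<T_R$. You instead identify the Galerkin system with a finite-dimensional SDE in $\R^{nN}$, insert a smooth cutoff $\chi_R(\|\cdot\|_{H^1})$ in the drift to obtain globally Lipschitz coefficients, invoke the classical strong-solution theory, and then remove the cutoff below the exit time. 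Both arguments rely on the same two ingredients---norm equivalence on $H_N$ and the $R$-bound controlling the quadratic drift---and reach the same conclusion; your version sidesteps the small-time patching implicit in the contraction argument and handles the infinite-dimensional noise transparently, while the paper's version stays in the functional-analytic framework of $M_T$ that is reused for the subsequent entropy estimates.
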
 

\begin{proof}
We use Banach's fixed-point theorem. Define the fixed-point operator 
$S:M_T\to M_T$ for $v\in M_T$ and $\psi\in H_N^n$ by
\begin{align*}
  (S(v)(t),\psi)_{L^2(\dom)} &= \sum_{i=1}^n(\Pi_N u_i^0,\psi_i)_{L^2(\dom)}
	- \sum_{i=1}^n\int_0^t\int_\dom\big(\delta\na v_i 
	+ v_i^+\na p_i(v)\big)\cdot\na\psi_i\dd x\dd s \\
	&\phantom{xx}{}
	+ \sum_{i=1}^n\bigg(\sum_{j=1}^n\int_0^t\sigma_{ij}(v(s))\dd W_j(s),\psi_i
	\bigg)_{L^2(\dom)}\quad\Prob\mbox{-a.s.}, \ t\in[0,T].
\end{align*}
The aim is to show that $S$ is a contraction on $M_{T\wedge\tau_R}$ for sufficiently
small $T>0$. 

First, we verify that $S$ is a self-mapping.
Let $v\in M_T$, $\psi\in H_N^n$, and $T_R:=T\wedge\tau_R$. Then
\begin{align}\label{2.aux}
  & \|(S(v),\psi)_{L^2(\dom)}\|_{L^2(\Omega;L^\infty(0,T_R))}^2
	\le I_1+\cdots+I_4, \quad\mbox{where} \\
	& I_1 = \E\|u^0\|_{L^2(\dom)}^2\|\psi\|_{L^2(\dom)}^2, \nonumber \\
	& I_2 = \E\bigg(\sup_{0<t<T_R}\bigg|\sum_{i=1}^n\int_0^t\int_\dom
	\delta\na v_i(s)\cdot\na\psi_i\dd x\dd s\bigg|\bigg)^2, \nonumber \\
	& I_3 = \E\bigg(\sup_{0<t<T_R}\bigg|\sum_{i,j=1}^n\int_0^t\int_\dom
	a_{ij} v_i^+(s)\na v_j(s)\cdot\na \psi_i\dd x\dd s\bigg|\bigg)^2, \nonumber \\
	& I_4 = \E\bigg(\sup_{0<t<T_R}\bigg|\sum_{i=1}^n\bigg(\sum_{j=1}^n\int_0^t
	\sigma_{ij}(v)\dd W_j(s),\psi_i\bigg)_{L^2(\dom)}\bigg|\bigg)^2. \nonumber 
\end{align}
We estimate the term $I_2$, using the equivalence of the norms in $H_N$:
\begin{align*}
  I_2 &\le \delta^2 T\E\bigg(\sup_{0<t<T_R}\sum_{i=1}^n\int_0^t\bigg|
	\int_\dom\na v_i(s)\cdot\na\psi_i\dd x\bigg|^2\dd s\bigg) \\
	&\le \delta^2 T C\E\bigg(\int_0^{T_R}\|\na v(s)\|_{L^2(\dom)}^2\dd s\bigg)
	\|\na\psi\|_{L^2(\dom)}^2 \\
	&\le C(N)\delta^2 T^2\E\bigg(\sup_{0<s<T_R}\|v(s)\|_{L^2(\dom)}^2\bigg)
	\|\psi\|_{L^2(\dom)}^2
	= C(N)\delta^2 T^2\E\|v\|_{M_{T_R}}^2\|\psi\|_{L^2(\dom)}^2,
\end{align*}
where here and in the following $C>0$, $C_i>0$, etc.\ are constants independent
of the solution, with values changing from line to line.
The term $I_3$ is estimated in a similar way, taking into account that
$\|\na v(s)\|_{L^2(\dom)}\le R$ for $s<T_R$:
\begin{align*}
	I_3 &\le C T\E\bigg(\int_0^{T_R}\|v(s)\|_{L^2(\dom)}^2
	\|\na v(s)\|_{L^2(\dom)}^2\dd s\bigg)\|\na\psi\|_{L^\infty(\dom)}^2 \\
	&\le C(N)R^2T^2\E\bigg(\sup_{0<s<T_R}\|v(s)\|_{L^2(\dom)}^2\bigg)
	\|\psi\|_{L^2(\dom)}^2,
\end{align*}
Finally, by the 
Burkholder--Davis--Gundy inequality \cite[Theorem 1.1.7]{LoRo17} and (H4),
\begin{align*}
  I_4 &\le C\E\bigg(\int_0^{T_R}\|\sigma(v(s))\|_{\mathcal{L}_2(U;L^2(\dom))}^2\dd s
	\bigg)\|\psi\|_{L^2(\dom)}^2 \\
	&\le C_\sigma^2C\E\bigg(\int_0^{T_R}\|v(s)\|_{L^2(\dom)}^2\dd s\bigg)
	\|\psi\|_{L^2(\dom)}^2 \le C_\sigma^2 C(N)T\|v\|_{M_{T_R}}^2\|\psi\|_{L^2(\dom)}^2.
\end{align*}
Inserting these estimates into \eqref{2.aux} leads to
$$
  \|S(v)\|_{M_{T_R}}^2 \le \E\|u^0\|_{L^2(\dom)}^2
	+ C(N,R)(T^2+T)\|v\|_{M_{T_R}}^2.
$$
This shows that $S$ is a self-mapping. Next, we prove that $S$ is a contraction
if $T>0$ is sufficiently small. The estimations are similar as above with the
exception of the nonlinear diffusion part. Let $u,v\in M_T$ and $\psi\in H_N^n$.
Then
\begin{align}\label{2.aux2}
  & \|(S(u)-S(v),\psi)_{L^2(\dom)}\|_{L^2(\Omega;L^\infty(0,T_R))}^2
	\le I_5+I_6+I_7, \quad\mbox{where} \\
	& I_5 = \E\bigg(\sup_{0<t<T_R}\bigg|\sum_{i=1}^n\int_0^t\int_\dom
	\delta\na(u_i-v_i)(s)\cdot\na\psi_i \dd x\dd s\bigg|\bigg)^2, \nonumber \\
	& I_6 = \E\bigg(\sup_{0<t<T_R}\bigg|\sum_{i,j=1}^n\int_0^t\int_\dom
	a_{ij}\big(u_i^+(s)\na u_j(s)-v_i^+(s)\na v_j(s)\big)
	\cdot\na\psi_i\dd x\dd s\bigg|\bigg)^2, \nonumber \\
	& I_7 = \E\bigg(\sup_{0<t<T_R}\bigg|\sum_{i=1}^n\bigg(\int_0^t
	\big(\sigma_{ij}(u(s))-\sigma_{ij}(v(s))\big)\dd W_j(s),\psi_i\bigg)_{L^2(\dom)}
	\bigg|\bigg)^2. \nonumber 
\end{align}
The terms $I_5$ and $I_7$ are estimated as $I_2$ and $I_4$, respectively, giving
\begin{align*}
  I_5 \le C(N)\delta^2T^2\|u-v\|_{M_{T_R}}^2\|\psi\|_{L^2(\dom)}^2, \quad
	I_7 \le C(N)T\|u-v\|_{M_{T_R}}^2\|\psi\|_{L^2(\dom)}^2.
\end{align*}
Writing $u_i^+\na u_j-v_i^+\na v_j 
= u_i^+\na(u_j-v_j) + (u_i^+-v_i^+)\na v_j$
and using the Lipschitz continuity of $z\mapsto z^+$, 
the remaining term becomes
\begin{align*}
  I_6 
	&\le C_AT\E\bigg(\int_0^{T_R}\|u(s)\|_{L^2(\dom)}^2\|\na(u-v)(s)\|_{L^2(\dom)}^2
	\dd s\bigg)\|\na\psi\|_{L^\infty(\dom)}^2 \\
	&\phantom{xx}{}+ C_AT\E\bigg(\int_0^{T_R}\|(u-v)(s)\|_{L^2(\dom)}^2
	\|\na v(s)\|_{L^2(\dom)}^2\dd s\bigg)\|\na\psi\|_{L^\infty(\dom)}^2 \\
	&\le C_AC(N)R^2T^2\E\bigg(\sup_{0<s<T_R}\|(u-v)(s)\|_{L^2(\dom)}^2\bigg)
	\|\psi\|_{L^2(\dom)}^2 \\
	&= C(N,R)T^2\|u-v\|_{M_{T_R}}^2\|\psi\|_{L^2(\dom)}^2.
\end{align*}
We conclude from \eqref{2.aux2} that
$$
  \|S(u)-S(v)\|_{M_{T_R}}^2 \le C(N,R)(T^2+T)\|u-v\|_{M_{T_R}}^2.
$$
Then, choosing $T>0$ such that $C(N,R)(T^2+T)<1$, the mapping
$S:M_{T_R}\to M_{T_R}$ is a contraction. By Banach's fixed-point theorem,
there exists a unique fixed point $u^N\in M_{T_R}$ that solves
\eqref{2.eqN}--\eqref{2.icN} in $[0,T_R]$.
\end{proof}

\subsection{Uniform estimates}

We show that the solution $u^N$ to \eqref{2.eqN}--\eqref{2.icN} satisfies
suitable uniform estimates.
The following lemma is the key result of this subsection.

\begin{lemma}[Entropy inequality for $u^N$]\label{lem.eiN}
Let $T>0$ and let $u^N$ be the pathwise unique strong solution to 
\eqref{2.eqN}--\eqref{2.icN} on $[0,T\wedge\tau_R]$, 
constructed in Lemma \ref{lem.ex.uN}.
Then there exists a constant $C(\lambda,T)$, depending on $\lambda$ and $T$, such that
\begin{align*}
  \E&\bigg(\sup_{0<t<T}H(u^N(t))\bigg)
	+ 2\delta\lambda\E\bigg(\int_0^{T}\int_\dom
	|\na u^N(s)|^2\dd x\dd s\bigg) \\
	&\phantom{xx}{}+\sum_{i=1}^n\E\bigg(\int_0^{T}\int_\dom(u_i^N)^+(s)
	|\na p_i(u^N(s))|^2\dd x\dd s\bigg)
	\le \E H(u^N(0))C(\lambda,T),
\end{align*}
where $\lambda>0$ is the smallest eigenvalue of $PA$.
\end{lemma}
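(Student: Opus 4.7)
The plan is to apply It\^{o}'s formula to $H(u^N(t))$, which is a smooth quadratic functional on the finite-dimensional Galerkin space $H_N^n$. Using the detailed-balance identity to symmetrize, I compute $\pa h/\pa u_i(u) = \pi_i p_i(u)$ and $\pa^2 h/\pa u_i \pa u_j(u) = \pi_i a_{ij}$, so It\^{o}'s formula on $[0, T \wedge \tau_R]$ yields an identity of the form
\[
H(u^N(t)) = H(u^N(0)) + D(t) + R(t) + M(t),
\]
where $D$ comes from the drift of $\dd u_i^N$, $R$ is the It\^{o} correction, and $M$ is a local martingale.

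The key structural observation for the drift is that $p_i(u^N) = \sum_j a_{ij} u_j^N$ belongs to $H_N$ as a linear combination of the Galerkin components. Hence, by self-adjointness of $\Pi_N$ on $L^2(\dom)$, I can move $\Pi_N$ off the divergence and onto $p_i(u^N)$, where it acts as the identity; integration by parts (using the no-flux boundary conditions, encoded weakly by taking test functions in $H_N \subset H^1(\dom)$) then produces
\[
D(t) = -\delta \sum_{i,j=1}^n \pi_i a_{ij} \int_0^t \int_\dom \na u_i^N \cdot \na u_j^N \dd x \dd s - \sum_{i=1}^n \pi_i \int_0^t \int_\dom (u_i^N)^+ |\na p_i(u^N)|^2 \dd x \dd s.
\]
Since $PA$ is symmetric positive definite with smallest eigenvalue $\lambda$, the first sum dominates $\delta\lambda \int_0^t \|\na u^N\|_{L^2(\dom)}^2 \dd s$ pointwise, producing the two dissipation terms on the left-hand side of the stated inequality (up to the numerical convention used for $\lambda$).

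For the It\^{o} correction, Hypothesis (H4) gives $R(t) \le C \sum_{i,j} \int_0^t \|\sigma_{ij}(u^N)\|^2_{\mathcal{L}_2(U;L^2(\dom))} \dd s \le C C_\sigma^2 \int_0^t \|u^N\|^2_{L^2(\dom)} \dd s \le C(\lambda) \int_0^t H(u^N) \dd s$, using the coercivity $H(u) \ge (\lambda/2) \|u\|^2_{L^2(\dom)}$. For the martingale, I would note that $M(t) = \sum_{i,j} \int_0^t (\pi_i p_i(u^N), \sigma_{ij}(u^N) \dd W_j)_{L^2}$ (with $\Pi_N$ again stripped via self-adjointness) has quadratic variation dominated by $C \int_0^t H(u^N)^2 \dd s$. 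The Burkholder--Davis--Gundy inequality combined with the splitting $(\int_0^T H^2 \dd s)^{1/2} \le (\sup_{[0,T]} H)^{1/2}(\int_0^T H \dd s)^{1/2}$ and Young's inequality then absorbs a $\tfrac14 \E \sup_t H(u^N)$ term to the left-hand side. Taking the supremum in $t$, then expectation, and applying Gronwall's lemma gives the claim on $[0, T \wedge \tau_R]$ with a constant independent of $R$; letting $R \to \infty$ extends the bound to $[0, T]$.

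The main obstacle is the careful handling of the projection $\Pi_N$: the entire argument rests on the fact that $p_i(u^N) \in H_N$, which lets me strip $\Pi_N$ from the deterministic divergence term and from the stochastic integrand via self-adjointness and recover the entropy identity \eqref{1.ei} at the Galerkin level. A secondary but essential subtlety is the BDG--Young absorption, which requires the $\sup$ to be pulled inside the square root before Young's inequality is applied; without this trick, the stochastic term cannot be controlled by the dissipative drift.
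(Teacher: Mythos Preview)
Your proof is correct and follows essentially the same approach as the paper: apply It\^o's formula to $H(u^N)=\tfrac12\|(PA)^{1/2}u^N\|_{L^2(\dom)}^2$, estimate the drift via the positive definiteness of $PA$, bound the It\^o correction through (H4) and the coercivity $H(u)\ge(\lambda/2)\|u\|_{L^2}^2$, handle the martingale by BDG plus the splitting/Young trick, and close with Gronwall and $R\to\infty$. Your explicit remark that $p_i(u^N)\in H_N$ (so $\Pi_N$ can be stripped by self-adjointness) is a point the paper leaves implicit, but otherwise the arguments coincide.
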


\begin{proof}
Since $PA$ is symmetric positive definite, there exists $(PA)^{1/2}$
and the process $Y(t)=(PA)^{1/2}u^N(t)$
is well defined. Hence, we can write the entropy \eqref{1.H} as 
$$
  H(u^N) = \frac12\sum_{i,j=1}^n\int_\dom\pi_i a_{ij}u_i^Nu_j^N\dd x
	= \frac12\int_\dom (u^N)^T(PA)u^N\dd x = \frac12\|(PA)^{1/2}u^N\|_{L^2(\dom)}^2,
$$
Let $t_R=t\wedge\tau_R$ and $T_R=T\wedge\tau_R$. 
We apply It\^o's lemma \cite[Theorem 4.2.5]{LiRo15} to $Y(t)$:
\begin{align}\label{2.HuN}
  & H(u^N(t_R)) = H(u^N(0)) + J_1 + \cdots + J_4, \quad\mbox{where} \\
	& J_1 = -\delta\sum_{i,j=1}^n\pi_i a_{ij}\int_0^{t_R}\int_\dom\na u_i^N(s)\cdot
	\na u_j^N(s)\dd x\dd s
	\le -\delta\lambda\int_0^{t_R}\int_\dom|\na u^N|^2\dd x\dd s, \nonumber \\
	& J_2 = -\sum_{i=1}^n\int_0^{t_R}\int_\dom (u_i^N)^+(s)
	|\na p_i(u^N(s))|^2\dd x\dd s \le 0, \nonumber \\
	& J_3 = \frac{1}{2}\int_0^{t_R}\int_\dom\operatorname{Tr}\big[\sigma(u^N(s))
	\mathrm{D}^2 h(u^N(s))\sigma(u^N(s))^*\big]\dd x\dd s, \nonumber \\
	& J_4 = \sum_{i,j=1}^n \pi_i a_{ij}\int_\dom\bigg(\int_0^{t_R} u_i^N(s)
	\sigma_{ij}(u^N(s))\dd W_j(s)\bigg)\dd x. \nonumber 
\end{align}
Using $\mathrm{D}^2 h(u^N)=PA$ and Hypothesis (H4),
\begin{align}\label{2.J3}
  J_3 &= \frac12\int_0^{t_R}\|(PA)^{1/2}\sigma(u^N(s))
	\|_{\mathcal{L}_2(U;L^2(\dom))}^2\dd s
	\le C\int_0^{t_R}\|\sigma(u^N(s))\|_{\mathcal{L}_2(U;L^2(\dom))}^2\dd s \\
	&\le CC_\sigma^2\int_0^{t_R}\|u^N(s)\|_{L^2(\dom)}^2\dd s
	\le C(\lambda)\int_0^{t_R} H(u^N(s))\dd s. \nonumber
\end{align}
Inserting this estimate into \eqref{2.HuN}, taking the supremum over $[0,T_R]$
and then the expectation, we find that
\begin{align}\label{2.EHuN}
  \E&\bigg(\sup_{0<t<T_R}H(u^N(t))\bigg)
	\le \E H(u^N(0)) - \delta\lambda\E\bigg(\sup_{0<t<T_R}\int_0^{t}\int_\dom
	|\na u^N(s)|^2\dd x\dd s\bigg) \\
	&\phantom{xx}{}-\E\bigg(\sup_{0<t<T_R}\sum_{i=1}^n\int_0^{t}\int_\dom 
	(u_i^N)^+(s)|\na p_i(u^N(s))|^2\dd x\dd s\bigg) \nonumber \\
	&\phantom{xx}{}+ C(\lambda)\E\bigg(\sup_{0<t<T_R}\int_0^t H(u^N(s))\dd s\bigg)
	+ \E\bigg(\sup_{0<t<T_R}J_4(t)\bigg). \nonumber
\end{align}
We deduce from the Burkholder--Davis--Gundy inequality
that
\begin{align*}
  \E\bigg(\sup_{0<t<T_R}J_4(t)\bigg) 
	&\le C\sum_{i,j,\ell=1}^n\E\bigg\{\sup_{0<t<T_R}\int_0^t\sum_{k=1}^\infty
	\bigg(\int_\dom u_j^N(s)\sigma_{i\ell}(u^N(s))\eta_k\dd x\bigg)^2\dd s\bigg\}^{1/2} \\
	&\le C\sum_{i,j,\ell=1}^n\E
	\bigg\{\sup_{0<t<T_R}\int_0^t\sum_{k=1}^\infty\|u_j^N(s)\|_{L^2(\dom)}^2
	\|\sigma_{i\ell}(u^N(s))\eta_k\|_{L^2(\dom)}^2\dd s\bigg\}^{1/2} \\
	&\le C\E\bigg(\int_0^{T_R}\|u^N(s)\|_{L^2(\dom)}^2
	\|\sigma(u^N(s))\|_{\mathcal{L}_2(U;L^2(\dom))}^2\dd s\bigg)^{1/2}.
\end{align*}
Therefore, using Hypothesis (H4) and Young's inequality,
\begin{align*}
  \E&\bigg(\sup_{0<t<T_R}J_4(t)\bigg)
	\le C_\sigma C\E\bigg\{\sup_{0<t<T_R}\|u^N(s)\|_{L^2(\dom)}
	\bigg(\int_0^{T_R}\|u^N(s)\|_{L^2(\dom)}^2\dd s\bigg)^{1/2}\bigg\} \\
	&\le C(\lambda)\E\bigg\{\sup_{0<t<T_R}\|(PA)^{1/2}u^N(s)\|_{L^2(\dom)}
	\bigg(\int_0^{T_R}\|(PA)^{1/2}u^N(s)\|_{L^2(\dom)}^2\dd s\bigg)^{1/2}\bigg\} \\
	&\le \frac12\E\bigg(\sup_{0<t<T_R}H(u^N(t))\bigg)
	+ C(\lambda)\int_0^{T_R}\E\bigg(\sup_{0<s<t}H(u^N(s))\bigg)\dd t.
\end{align*}
We insert this estimate into \eqref{2.EHuN},
\begin{align}\label{2.EHuN2}
  \frac12&\E\bigg(\sup_{0<t<T_R}H(u^N(t))\bigg)
	\le \E H(u^N(0)) - \delta\lambda\E\int_0^{T_R}\int_\dom
	|\na u^N(s)|^2\dd x\dd s \\
	&{}-\sum_{i=1}^n\E\int_0^{T_R}\int_\dom (u_i^N)^+(s)
	|\na p_i(u^N(s))|^2\dd x\dd s
	+ C(\lambda)\int_0^{T_R}\E\bigg(\sup_{0<s<t}H(u^N(s))\bigg)\dd t, \nonumber
\end{align}
and apply Gronwall's lemma to obtain
$$
  \E\bigg(\sup_{0<t<T_R}H(u^N(t))\bigg) \le \E H(u^N(0))e^{2C(\lambda)T}.
$$
Hence, the right-hand side of \eqref{2.EHuN2}
does not depend on the chosen sequence of stopping times $\tau_R$, 
and we can pass to the limit $R\to\infty$, finishing the proof.
\end{proof}


\begin{corollary}[Uniform estimates]\label{coro.est}
Let $T>0$ and let $u^N$ be the pathwise unique strong solution to 
\eqref{2.eqN}--\eqref{2.icN} on $[0,T\wedge\tau_R]$, 
constructed in Lemma \ref{lem.ex.uN}.
Then there exists a constant $C>0$, which is independent of $N$, such that
for all $i=1,\ldots,n$,
\begin{equation*}
  \E\bigg(\sup_{0<t<T}\|u_i^N(t)\|_{L^2(\dom)}^2\bigg)
	+ \delta\lambda\E\bigg(\int_0^T\|\na u_i^N(t)\|_{L^2(\dom)}^2\dd t\bigg) \le C.
\end{equation*}
\end{corollary}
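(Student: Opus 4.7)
\emph{Proof plan.} The plan is to deduce the corollary as a direct re-packaging of Lemma \ref{lem.eiN} once it is combined with the coercivity of the Rao entropy and an initial-data bound coming from Hypothesis (H2).

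The first step is to exploit Hypothesis (H3). Since $PA$ is symmetric positive definite (as noted after Hypothesis (H4)) with smallest eigenvalue $\lambda>0$ and largest eigenvalue $\Lambda$, one has the two-sided pointwise bound
$$
  \frac{\lambda}{2}\|v\|_{L^2(\dom)}^2
  \le H(v) = \frac12\int_\dom v^{\mathsf T}(PA)v\,\dd x
  \le \frac{\Lambda}{2}\|v\|_{L^2(\dom)}^2
$$
for every $v=(v_1,\ldots,v_n)\in L^2(\dom;\R^n)$. The $L^2$-orthogonality of the Galerkin projection $\Pi_N$ gives $\|\Pi_N u_i^0\|_{L^2(\dom)}\le\|u_i^0\|_{L^2(\dom)}$ $\Prob$-a.s., so combined with Hypothesis (H2) the upper bound yields
$$
  \E H(u^N(0)) \le \frac{\Lambda}{2}\,\E\|u^0\|_{L^2(\dom)}^2 < \infty,
$$
and this bound is uniform in $N$.

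The second step is to insert this initial estimate into Lemma \ref{lem.eiN}. The lower bound on $H$ gives
$$
  \E\bigg(\sup_{0<t<T}\|u_i^N(t)\|_{L^2(\dom)}^2\bigg)
  \le \E\bigg(\sup_{0<t<T}\|u^N(t)\|_{L^2(\dom)}^2\bigg)
  \le \frac{2}{\lambda}\,\E\bigg(\sup_{0<t<T}H(u^N(t))\bigg),
$$
and the right-hand side is controlled by $\lambda^{-1}\Lambda\,C(\lambda,T)\,\E\|u^0\|_{L^2(\dom)}^2$ through Lemma \ref{lem.eiN} and the initial bound above. The gradient estimate is immediate: the term $2\delta\lambda\,\E\int_0^T\int_\dom|\na u^N|^2\dd x\dd s$ already appears on the left-hand side of the entropy inequality, and splitting $|\na u^N|^2=\sum_{i=1}^n|\na u_i^N|^2$ yields the required componentwise bound after absorbing a factor of $2$.

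The final point to address is that Lemma \ref{lem.eiN} was established on $[0,T\wedge\tau_R]$, but its proof already removes the stopping time at the very end by observing that the Gronwall bound is independent of $R$. Therefore no further stopping-time argument is needed here, and combining the two estimates with the constant $C:=\lambda^{-1}\Lambda\,C(\lambda,T)\,\E\|u^0\|_{L^2(\dom)}^2$ (which is finite by Hypothesis (H2) and independent of $N$) concludes the proof. I do not expect any genuine obstacle in this step: the substance of the corollary lies entirely in Lemma \ref{lem.eiN}, while here one merely translates the entropy-weighted estimates into standard $L^2$ and $H^1$ norms via the spectral bounds on $PA$.
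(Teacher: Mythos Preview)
Your proposal is correct and is exactly the argument the paper leaves implicit: the corollary is stated without proof immediately after Lemma~\ref{lem.eiN}, and the intended derivation is precisely to use the spectral equivalence $\frac{\lambda}{2}\|v\|_{L^2(\dom)}^2\le H(v)\le\frac{\Lambda}{2}\|v\|_{L^2(\dom)}^2$ together with the $L^2$-contractivity of $\Pi_N$ and Hypothesis~(H2) to bound $\E H(u^N(0))$ uniformly in $N$. Your handling of the stopping time and the componentwise gradient bound is also in line with the paper.
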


We also need higher-order moment estimates.

\begin{lemma}[Higher-order moment estimates]\label{lem.higher}
Let $T>0$, $p>2$, and let $u^N$ be the pathwise unique strong solution to 
\eqref{2.eqN}--\eqref{2.icN} on $[0,T\wedge\tau_R]$, 
constructed in Lemma \ref{lem.ex.uN}.
Then there exists a constant $C>0$, which is independent of $N$, such that
$$
  \E\bigg(\sup_{0<t<T}\|u^N(t)\|_{L^2(\dom)}^p\bigg) \le C.
$$
\end{lemma}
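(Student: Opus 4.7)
The plan is to lift the argument of Lemma \ref{lem.eiN} to a higher power by applying It\^o's formula to $\varphi(H(u^N))$ with $\varphi(x)=x^{p/2}$. Since $PA$ is positive definite with smallest eigenvalue $\lambda$, the Rao entropy is equivalent to the squared $L^2$ norm, so $\|u^N\|_{L^2(\dom)}^p\le C(\lambda,p)\,H(u^N)^{p/2}$, and it is enough to produce an $L^\infty_t L^1_\omega$-bound for $H(u^N)^{p/2}$.

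From the It\^o expansion used in the proof of Lemma \ref{lem.eiN}, one has $\dd H(u^N)=\mu(s)\,\dd s+\dd M(s)$ with, pathwise,
\begin{equation*}
  \mu(s)\le-\delta\lambda\|\na u^N\|_{L^2(\dom)}^2-\sum_{i=1}^n\int_\dom(u_i^N)^+|\na p_i(u^N)|^2\dd x+C(\lambda)H(u^N(s)),
\end{equation*}
by the treatment of $J_3$, and $M$ a continuous local martingale whose quadratic variation is, via Hypothesis (H4) and Cauchy--Schwarz (as in the BDG step for $J_4$), bounded by $\dd\langle M\rangle_s\le C(\lambda)H(u^N(s))^2\dd s$. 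Applying It\^o's formula to $\varphi(H(u^N))$ (regularising by $(\varepsilon+H)^{p/2}$ with $\varepsilon\to 0$ if one wishes to avoid the singularity at $H=0$), dropping the two nonpositive dissipation contributions since $\varphi'\ge 0$, and using the elementary bounds $\varphi'(H)\cdot H\le CH^{p/2}$ and $\varphi''(H)\cdot H^2\le CH^{p/2}$ yields, pathwise on $[0,T\wedge\tau_R]$,
\begin{equation*}
  H(u^N(t_R))^{p/2}\le H(u^N(0))^{p/2}+C\int_0^{t_R}H(u^N(s))^{p/2}\dd s+\int_0^{t_R}\varphi'(H(u^N(s)))\dd M(s).
\end{equation*}

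Taking $\sup_{0<t<T_R}$ and expectations, the Burkholder--Davis--Gundy inequality controls the stochastic integral by
\begin{equation*}
  C\,\E\bigg(\int_0^{T_R}\varphi'(H(u^N))^2\dd\langle M\rangle_s\bigg)^{1/2}\le C\,\E\bigg(\int_0^{T_R}H(u^N(s))^p\dd s\bigg)^{1/2},
\end{equation*}
and the Young inequality $ab\le\tfrac12 a^2+\tfrac12 b^2$ together with Fubini rewrites this as $\tfrac12\E\sup_{0<t<T_R}H(u^N(t))^{p/2}+C\int_0^{T_R}\E\sup_{0<s<t}H(u^N(s))^{p/2}\dd t$. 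Absorbing the first summand on the left and invoking Gronwall's lemma yields $\E\sup_{0<t<T_R}H(u^N(t))^{p/2}\le e^{CT}\E H(u^N(0))^{p/2}$, after which one lets $R\to\infty$ using Corollary \ref{coro.est} (which guarantees $\tau_R\to\infty$ $\Prob$-a.s.). The main obstacle is the sharpness of the BDG step: the bound $\dd\langle M\rangle\le CH^2\dd s$ produces \emph{exactly} $H^p$ inside the stochastic integrand, so the absorption into $\sup H^{p/2}$ is the borderline case and relies crucially on the linear growth in Hypothesis (H4); this is also the place where an $L^p(\Omega)$-moment on $u^0$ must be supplied (either as an implicit strengthening of (H2) or as a dependence of $C$ on $\E\|u^0\|_{L^2(\dom)}^p$) in order to close the estimate uniformly in $N$.
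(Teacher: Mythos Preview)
Your proposal is correct. The paper uses the same ingredients---the entropy identity from Lemma~\ref{lem.eiN}, Burkholder--Davis--Gundy, Young's inequality for absorption, and Gronwall---but reaches the power $p/2$ by a slightly different route: instead of applying It\^o's formula to $H^{p/2}$, it takes the pathwise inequality $H(u^N(t_R))\le H(u^N(0))+J_3+J_4$ from \eqref{2.HuN}, raises both sides to the power $p/2$, and then applies BDG to $\E\sup_t|J_4|^{p/2}$ directly. Both routes yield the same absorption structure and the same Gronwall closure; your remark that an $L^p(\Omega)$-moment of $u^0$ is implicitly needed applies equally to the paper's version.
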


\begin{proof}
Let $T_R=T\wedge\tau_R$. 
We start from \eqref{2.HuN}, neglect the terms $J_1$ and $J_2$, take the
supremum over $(0,T_R)$, raise the power $p/2$ on both sides, take the expectation,
and use H\"older's inequality:
\begin{align}\label{2.J56}
  & \E\bigg(\sup_{0<t<T_R}H(u^N(t))^{p/2}\bigg) \le C\E H(u^N(0))^{p/2}
	+ J_5 + J_6, \quad\mbox{where} \\
	& J_5 = CT^{p/2-1}\E\int_0^{T_R}\|(PA)^{1/2}\sigma(u^N(s))
	\|_{\mathcal{L}_2(U;L^2(\dom))}^p\dd s, \nonumber \\
	& J_6 = \E\bigg(\sup_{0<t<T_R}\bigg|\sum_{i,j,k=1}^n\pi_i a_{ij}\int_\dom\int_0^t 
	u_j^N(s)\sigma_{ik}(u^N(s))\dd W_k(s)\dd x\bigg|^{p/2}\bigg) \nonumber
\end{align}
The term $J_5$ can be estimated as in \eqref{2.J3}:
$$
  J_5 \le C(\lambda)\E\int_0^{T_R} H(u^N(s))^{p/2}\dd s
	\le C(\lambda)\int_0^{T_R}\E\bigg(\sup_{0<s<t}H(u^N(s))^{p/2}\bigg)\dd t.
$$
Similarly as in Lemma \ref{lem.eiN}, we apply the Burkholder--Davis--Gundy 
inequality and Young's inequality to find that
\begin{align*}
  J_6 &\le C\E\bigg(\int_0^{T_R}\|u^N(s)\|_{L^2(\dom)}^2\|\sigma(u^N(s))
	\|_{\mathcal{L}_2(U;L^2(\dom))}^2\dd s\bigg)^{p/4} \\
	&\le CT^{1-p/2}\E\bigg\{\bigg(\sup_{0<s<T_R}\|u^N(s)\|_{L^2(\dom)}^{p}\bigg)^{1/2}
	\bigg(\int_0^{T_R}\|u^N(s)\|_{L^2(\dom)}^p\dd s\bigg)^{1/2}\bigg\} \\
	&\le \frac12\E\bigg(\sup_{0<s<T_R}H(u^N(s))^{p/2}\bigg)
	+ C(T,\lambda)\int_0^{T_R}\E\bigg(\sup_{0<s<t}H(u^N(s))^{p/2}\bigg)\dd t.
\end{align*}
We insert these estimates into \eqref{2.J56} and apply Gronwall's lemma
to conclude that
$$
  \E\bigg(\sup_{0<t<T_R}H(u^N(t))^{p/2}\bigg) \le C(\lambda,T)\E H(u^N(0))^{p/2},
$$
and the positive definiteness of $(PA)^{1/2}$ finishes the proof.
\end{proof}

\subsection{Tightness}

The tightness of the sequence of laws of $u^N$ on a suitable subspace
is proved similarly as in \cite[Section 2.4]{DJZ19}.
We consider the following spaces:
\begin{itemize}
\item $C^0([0,T];H^3(\dom)')$ with the topology $\mathcal{T}_1$, induced by its
canonical norm. 
\item $L_w^2(0,T;H^1(\dom))$ is the space $L^2(0,T;H^1(\dom))$ with the weak topology
$\mathcal{T}_2$.
\item $L^2(0,T;L^2(\dom))$ with the topology $\mathcal{T}_3$ induced by its
canonical norm.
\item $C^0([0,T];L^2_w(\dom))$ is the space of weakly continuous functions
$v:[0,T]\to L^2(\dom)$ endowed with the weakest topology $\mathcal{T}_4$ such that
for all $h\in L^2(\dom)$, the mappings
$$
  C^0([0,T];L_w^2(\dom))\to C^0([0,T];\R), \quad u\mapsto (u(\cdot),h)_{L^2(\dom)},
$$
are continuous.
\end{itemize}

We define the following space:
\begin{equation}\label{3.ZT}
  Z_T := C^0([0,T];H^3(\dom)')\cap L_w^2(0,T;H^1(\dom))\cap L^2(0,T;L^2(\dom))
	\cap C^0([0,T];L_w^2(\dom)),
\end{equation}
endowed with the topology $\mathcal{T}$, which is the maximum of the topologies
$\mathcal{T}_1,\ldots,\mathcal{T}_4$ of the corresponding spaces, i.e.\ the
smallest topology containing $\cap_{i=1}^4\mathcal{T}_i$. 

\begin{lemma}[Tightness]
Let $u^N$ be the pathwise unique strong solution to 
\eqref{2.eqN}--\eqref{2.icN}, 
constructed in Lemma \ref{lem.ex.uN}. Then the set of laws of $(u^N)$ is tight 
on $(Z_T,\mathcal{T})$.
\end{lemma}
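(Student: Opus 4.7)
My plan is to verify the hypotheses of the tightness criterion of \cite{ZGJ13}, which for an intersection space of this kind reduces to two ingredients: uniform bounds in the underlying Banach spaces (to handle the weak topologies $\mathcal{T}_2$ and $\mathcal{T}_4$ and to feed an Aubin--Lions--Dubinskii argument for the strong topology $\mathcal{T}_3$), together with a stochastic equicontinuity (Aldous) condition in the dual space $H^3(\dom)'$ that produces the uniform time regularity needed for $\mathcal{T}_1$.

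First I would collect the uniform moment estimates. Corollary \ref{coro.est} immediately gives
$$
  \sup_N \E\bigg(\sup_{0<t<T}\|u^N(t)\|_{L^2(\dom)}^2 + \int_0^T\|\na u^N(t)\|_{L^2(\dom)}^2\,\dd t\bigg) \le C,
$$
and Chebyshev's inequality then ensures that, for every $\eps>0$, bounded balls in $L^\infty(0,T;L^2(\dom))$ and $L^2(0,T;H^1(\dom))$ carry probability at least $1-\eps$ uniformly in $N$. These balls are compact in the weak topologies $\mathcal{T}_2$ and $\mathcal{T}_4$ (the latter combined with the Aldous condition below to control the time variable).

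The main work is the Aldous condition: for every sequence of $\mathbb{F}$-stopping times $\tau_N\le T$ and every $\theta_N\searrow 0$,
$$
  \E\|u^N(\tau_N+\theta_N) - u^N(\tau_N)\|_{H^3(\dom)'} \to 0 \quad\mbox{as }N\to\infty.
$$
Splitting the increment via \eqref{2.eqN} into a drift and a stochastic part, I would test against $\phi\in H^3(\dom)$ and use the Sobolev embedding $H^3(\dom)\hookrightarrow W^{1,\infty}(\dom)$ (valid here because $d\le 3$, which is where that hypothesis enters) to move gradients onto $\phi$. The linear drift contributes a term of order $\sqrt{\theta_N}$ by Cauchy--Schwarz against the $L^2(H^1)$ bound; the quadratic drift is treated with $\int|u_i^+||\na u_j||\na\phi|\,\dd x \le \|u_i^N\|_{L^2}\|\na u_j^N\|_{L^2}\|\na\phi\|_{L^\infty}$ combined with the higher-order moments from Lemma \ref{lem.higher}; and $\Pi_N$ costs nothing in $L^2$. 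The stochastic part is bounded by It\^o's isometry together with Hypothesis (H4):
$$
  \E\bigg\|\int_{\tau_N}^{\tau_N+\theta_N}\Pi_N\sigma_{ij}(u^N)\,\dd W_j\bigg\|_{L^2(\dom)}^2 \le C C_\sigma^2\,\E\int_{\tau_N}^{\tau_N+\theta_N}\|u^N\|_{L^2(\dom)}^2\dd s \le C\theta_N,
$$
which in turn bounds the $H^3(\dom)'$ norm at no additional cost.

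With the uniform $L^2(0,T;H^1(\dom))$ estimate and the Aldous condition in $H^3(\dom)'$ in hand, the compact embeddings $H^1(\dom)\hookrightarrow\hookrightarrow L^2(\dom)\hookrightarrow H^3(\dom)'$ together with the Aubin--Lions--Dubinskii criterion yield tightness simultaneously in $C^0([0,T];H^3(\dom)')$ and in $L^2(0,T;L^2(\dom))$, while the $L^\infty(L^2)$ bound plus the Aldous condition gives tightness in $C^0([0,T];L^2_w(\dom))$ (using that weak continuity on a bounded set in $L^2$ is metrizable). The \cite{ZGJ13} criterion is precisely designed so that these four tightnesses combine into tightness for the maximum topology $\mathcal{T}$. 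I expect the main obstacle to be the Aldous estimate for the quadratic drift: controlling $u_i^+\na p_i(u^N)$ uniformly in $N$ forces me to absorb one gradient into the test function, which is why the space $H^3(\dom)'$ (rather than $H^1(\dom)'$) is chosen and why the restriction $d\le 3$ appears at this point.
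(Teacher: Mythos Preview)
Your proposal is correct and follows essentially the same route as the paper: invoke the tightness criterion (the paper cites \cite[Corollary 3.9]{BrMo14}, which packages the same ingredients you list via \cite{ZGJ13}), feed in the uniform $L^\infty(0,T;L^2(\dom))$ and $L^2(0,T;H^1(\dom))$ bounds from Corollary~\ref{coro.est}, and verify the Aldous condition in $H^3(\dom)'$ by splitting into drift and stochastic parts, handling the quadratic drift with the embedding $H^3(\dom)\hookrightarrow W^{1,\infty}(\dom)$ and the noise with the It\^o isometry plus (H4). One small remark: you do not actually need the higher-order moments of Lemma~\ref{lem.higher} for the Aldous estimate on the quadratic drift---the paper gets by with Cauchy--Schwarz in $\omega$ applied to $\E\big[\sup_t\|u^N\|_{L^2}\cdot(\int_0^T\|\na u^N\|_{L^2}^2\dd t)^{1/2}\big]$, which only requires the second-moment bounds of Corollary~\ref{coro.est}.
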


\begin{proof}
We apply \cite[Corollary 3.9]{BrMo14}
with $U=H^3(\dom)$, $V=H^1(\dom)$, and $H=L^2(\dom)$. Since $V\hookrightarrow H$
is compact, Corollary \ref{coro.est}
shows that conditions (a) and (b) of \cite[Corollary 3.9]{BrMo14} are satisfied.
It remains to verify that $(u^N)$ satisfies the Aldous condition in $H^3(\dom)'$.
For this, let $(\tau_N)_{N\in\N}$ be a sequence of $\mathbb{F}$-stopping times
such that $0\le\tau_N\le T$. Let $t\in[0,T]$, $i\in\{1,\ldots,n\}$, 
and $\phi\in H^3(\dom)$. We write \eqref{2.eqN} as
\begin{align}\label{2.K123}
  & \langle u_i^N(t),\phi\rangle = \langle K_1^N + K_2^N(t) + K_3^N(t),\phi\rangle, 
	\quad\mbox{where} \\
	& \langle K_1^N,\phi\rangle = (\Pi_N(u_i^0),\phi)_{L^2(\dom)}, \nonumber \\
	& \langle K_2^N(t),\phi\rangle = -\int_0^t\int_\dom\Pi_N\bigg(\delta\na u_i^N(s)
	+ \sum_{j=1}^n a_{ij}(u_i^N)^+(s)\na u_j^N(s)\bigg)
	\cdot\na\phi\dd x\dd s, \nonumber \\
	& \langle K_3^N(t),\phi\rangle 
	= \sum_{j=1}^n\bigg(\int_0^t\Pi_N\big(\sigma_{ij}(u^N(s))\big)
	\dd W_j(s),\phi\bigg)_{L^2(\dom)}, \nonumber
\end{align}
and $\langle\cdot,\cdot\rangle$ denotes the dual pairing between $H^3(\dom)'$
and $H^3(\dom)$. (If $d\ge 1$ is arbitrary, we replace $H^3(\dom)$ by $H^m(\dom)$, 
where $m\ge 1+d/2$, $m\in\N$.)
Let $\theta>0$. Then, using the embedding
$H^3(\dom)\hookrightarrow W^{1,\infty}(\dom)$ for $d\le 3$ and the
Cauchy--Schwarz inequality,
\begin{align*}
  \E|&\langle K_2^N(\tau_N+\theta)-K_2^N(\tau_N),\phi\rangle|
	\le \E\int_{\tau_N}^{\tau_N+\theta}\bigg(\delta\|\na u_i(t)\|_{L^2(\dom)}
	\|\na\phi\|_{L^2(\dom)} \\
	&\phantom{xx}{}+ \sum_{j=1}^n a_{ij}\|(u_i^N)^+(s)\|_{L^2(\dom)}
	\|\na u_j^N\|_{L^2(\dom)}\|\na\phi\|_{L^\infty(\dom)}\bigg)\dd t \nonumber \\
	&\le \delta\theta^{1/2}\bigg(\E\int_{\tau_N}^{\tau_N+\theta}
	\|\na u_i(t)\|_{L^2(\dom)}^2\dd t\bigg)^{1/2}\|\phi\|_{H^1(\dom)} \nonumber \\
	&\phantom{xx}{}+ C_A^{1/2}\theta^{1/2}
	\bigg\{\E\bigg(\sup_{0<t<T}\|u_i^N(t)\|_{L^2(\dom)}^2
	\bigg)\bigg\}^{1/2}\bigg\{\E\int_0^T\|\na u(t)\|_{L^2(\dom)}^2\dd t\bigg\}^{1/2}
	\|\phi\|_{H^3(\dom)}. \nonumber 
\end{align*}
Therefore, in view of the estimates of Corollary \ref{coro.est},
\begin{equation}\label{2.K2}
  \E|\langle K_2^N(\tau_N+\theta)-K_2^N(\tau_N),\phi\rangle|
	\le C\theta^{1/2}\|\phi\|_{H^3(\dom)}.
\end{equation}
For the stochastic term, we use the It\^o isometry and Hypothesis (H4):
\begin{align}\label{2.K3}
  \E|&\langle K_3^N(\tau_N+\theta)-K_3^N(\tau_N),\phi\rangle|^2 \\
	&\le \E\bigg(\int_{\tau_N}^{\tau_N+\theta}\|\sigma(u^N(t))
	\|_{\mathcal{L}_2(U;L^2(\dom))}^2\dd t\bigg)\|\phi\|_{L^2(\dom)}^2 \nonumber \\
	&\le C_\sigma\theta\E\bigg(\sup_{0<t<T}\|u^N(t)\|_{L^2(\dom)}^2\bigg)
	\|\phi\|_{L^2(\dom)}^2 \le C\theta\|\phi\|_{L^2(\dom)}^2. \nonumber 
\end{align}

Next, let $\zeta$, $\eps>0$. We conclude from the Chebyshev
inequality and \eqref{2.K2} that
\begin{align*}
  \Prob\big(&\|K_2^N(\tau_N+\theta)-K_2^N(\tau_N)\|_{H^3(\dom)'}\ge \zeta\big) \\
	&\le \frac{1}{\zeta}\E\|K_2^N(\tau_N+\theta)-K_2^N(\tau_N)\|_{H^3(\dom)'}
	\le \frac{C\theta^{1/2}}{\zeta}. 
\end{align*}
Choosing $\theta_1:=(\zeta\eps/C)^2$, we infer that
$$
  \sup_{N\in\N}\sup_{0<\theta<\theta_1}
	\Prob\big(\|K_2^N(\tau_N+\theta)-K_2^N(\tau_N)\|_{H^3(\dom)'}\ge \zeta\big)
	\le \eps.
$$
In a similar way, it follows from the Chebyshev inequality and \eqref{2.K3} that
$$
  \sup_{N\in\N}\sup_{0<\theta<\theta_2}
	\Prob\big(\|K_3^N(\tau_N+\theta)-K_3^N(\tau_N)\|_{L^2(\dom)}\ge \zeta\big)
	\le \eps.
$$
We infer from \eqref{2.K123} that
$$
  \sup_{N\in\N}\sup_{0<\theta<\min\{\theta_1,\theta_2\}}
	\Prob\big(\|u_i^N(\tau_N+\theta)-u_i^N(\tau_N)\|_{H^3(\dom)'}\ge \zeta\big)
	\le 2\eps.
$$
This shows that $(u_i^N)$ satisfies the Aldous condition in $H^3(\dom)'$.
\end{proof}

\subsection{Convergence of the approximate solutions}

According to \cite[Theorem 23]{DJZ19}, the topological space $(Z_T,\mathcal{T})$ 
possesses a sequence of continuous funtions $Z_T\to\R$ that separates points of $Z_T$.
We deduce from the theorem of Skorokhod--Jakubowski and the tightness of the laws 
of $(u^N)$ on $(Z_T,\mathcal{T})$ that there exists a subsequence of $(u^N)$,
which is not relabeled, a probability space $(\widetilde\Omega,\widetilde{\mathcal{F}},
\widetilde\Prob)$, and, on this space, $Z_T\times C^0([0,T];U_0)$-valued random
variables $(\widetilde u,\widetilde W)$, $(\widetilde{u}^N,\widetilde{W}^N)$
with $N\in\N$ such that $(\widetilde{u}^N,\widetilde{W}^N)$ has the same law as
$(u^N,W)$ on $\mathcal{B}(Z_T\times C^0([0,T];U_0)$ and
$$
  (\widetilde{u}^N,\widetilde{W}^N)\to (\widetilde w,\widetilde W)
	\quad\mbox{in }Z_T\times C^0([0,T];U_0)\ \Prob\mbox{-a.s. as }N\to\infty.
$$
We deduce from the definition of $Z_T$ that
\begin{align*}
  \widetilde{u}^N\to \widetilde u &\quad\mbox{in }C^0([0,T];H^3(\dom)')
	\mbox{ and }L^2(0,T;L^2(\dom)), \\
	\widetilde{u}^N\rightharpoonup \widetilde u &\quad\mbox{weakly in }
	L^2(0,T;H^1(\dom)), \\
	\widetilde{u}^N\to \widetilde u &\quad\mbox{in }C^0([0,T];L_w^2(\dom)), \\
	\widetilde{W}^N\to \widetilde W &\quad\mbox{in }C^0([0,T];U_0).
\end{align*}
The strong convergence of $(\widetilde u_i^N)$ implies that
$(\widetilde u_i^N)^+\to \widetilde u_i^+$ in $L^2(0,T;L^2(\dom))$
$\Prob$-a.s.
Using Corollary \ref{coro.est} and Lemma \ref{lem.higher} and arguing as in 
\cite[Section 2.5]{DJZ19}, we can prove that for $p>2$,
$$
  \widetilde{\E}\bigg(\int_0^T\|\widetilde u(t)\|_{H^1(\dom)}^2\dd t\bigg) \le C,
	\quad \widetilde{\E}\bigg(\sup_{0<t<T}\|\widetilde u(t)\|_{L^2(\dom)}^p\bigg)\le C.
$$

Let $\widetilde{\mathbb{F}}$ and $\widetilde{\mathbb{F}}^N$ be the filtrations
generated by $(\widetilde u,\widetilde W)$ and $(\widetilde{u}^N,\widetilde{W}^N)$,
respectively. The arguments of \cite[Prop.~B4]{BrOn13} allow us to show that these
new random variables are actually stochastic processes. The progressive
measurability of $\widetilde{u}^N$ is a consequence of \cite[Appendix B]{BHM13}.
Set $\widetilde{W}_j^{N,k}=(\widetilde{W}_j^N(t),\eta_k)_{L^2(\dom)}$. 
Then $\widetilde{W}_j^{N,k}(t)$ are independent, standard
$\widetilde{\F}_t$-Wiener processes. Passing to the limit $N\to\infty$
in the characteristic function, 
we infer that
$\widetilde{W}(t)$ are $\widetilde{F}_t$-martingales with the correct marginal
distributions. Then, by L\'evy's charaterization theorem, $\widetilde{W}(t)$ is
a cylindrical Wiener process. 
For the following lemma, we introduce
$$
  H_\nu^3(\dom) = \big\{v\in H^3(\dom):\na v\cdot\nu=0\mbox{ on }\pa\Omega\big\}.
$$

\begin{lemma}
Let $\phi\in L^2(\dom;\R^n)$ and $\psi\in H_\nu^3(\dom;\R^n)$. Then it holds for
$i=1,\ldots,n$ that
\begin{align*}
  & \lim_{N\to\infty}\widetilde{\E}\int_0^T\big(\widetilde{u}^N(t)-\widetilde u(t),
	\phi\big)_{L^2(\dom)}\dd t = 0, \quad
	\lim_{N\to\infty}\widetilde{\E}\big(\widetilde{u}^N(0)-\widetilde u(0),\phi
	\big)_{L^2(\dom)}^2 = 0, \\
	& \lim_{N\to\infty}\widetilde{\E}\int_0^T\bigg|\int_0^t\int_\dom
	\bigg(\delta\na\widetilde{u}^N(s)
	+ \sum_{j=1}^n a_{ij}(\widetilde{u}_i^N)^+(s)\na\widetilde{u}_j^N(s) \\
	&\phantom{xx}{}- \delta\na\widetilde u(s)
	- \sum_{j=1}^n a_{ij}\widetilde u_i^+(s)\na\widetilde u_j(s)\bigg)
	\cdot\na\psi_i\dd x\dd s\bigg|\dd t = 0, \\
	& \lim_{N\to\infty}\widetilde{\E}\int_0^T\bigg|\sum_{j=1}^n\int_0^t
	\big(\sigma_{ij}(\widetilde{u}^N(s))\dd \widetilde{W}_j^N(s)
	- \sigma_{ij}(\widetilde{u}(s))\dd\widetilde W_j(s),\phi_i\big)_{L^2(\dom)}
	\bigg|^2\dd t = 0.
\end{align*}
\end{lemma}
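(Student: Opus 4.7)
My plan is to handle the three limits separately: the first two follow from the weak $L^2$-convergence already established and the uniform moment bounds, the deterministic nonlinear limit requires a weak/strong decomposition of the quadratic drift tested against $\na\psi_i$, and the stochastic limit is the genuine obstacle because both the integrand $\sigma_{ij}(\widetilde u^N)$ and the integrator $\widetilde W^N$ vary with $N$.

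For the first statement, pointwise in $(\omega,t)$ the convergence in $C^0([0,T];L^2_w(\dom))$ gives $(\widetilde u^N(t),\phi)_{L^2(\dom)}\to(\widetilde u(t),\phi)_{L^2(\dom)}$. By Cauchy--Schwarz the integrand is majorised by $\|\phi\|_{L^2(\dom)}\sup_{0<t<T}\|\widetilde u^N(t)\|_{L^2(\dom)}$, which is uniformly integrable on $\widetilde\Omega\times(0,T)$ by Lemma \ref{lem.higher} with $p>2$, so Vitali's theorem delivers the limit. The initial-data statement is analogous: continuity of the $t=0$ evaluation on $C^0([0,T];L_w^2(\dom))$ gives pointwise convergence, and the squared inner product is uniformly integrable in $\omega$ by the same moment bound.

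For the second limit I would split the integrand on the $L^2(\dom)$-level as
\begin{align*}
  \delta\na(\widetilde u_i^N-\widetilde u_i) + \sum_{j=1}^n a_{ij}\big[(\widetilde u_i^N)^+\na(\widetilde u_j^N-\widetilde u_j) + \big((\widetilde u_i^N)^+-\widetilde u_i^+\big)\na\widetilde u_j\big],
\end{align*}
and pair each piece with $\na\psi_i$. The linear piece tends to zero a.s.\ after pairing, by the weak convergence $\na\widetilde u_i^N\rightharpoonup\na\widetilde u_i$ in $L^2(0,T;L^2(\dom))$. The two cross-diffusion pieces use the strong convergence $\widetilde u_i^N\to\widetilde u_i$ in $L^2(0,T;L^2(\dom))$, which propagates to $(\widetilde u_i^N)^+\to\widetilde u_i^+$ by the $1$-Lipschitz property of $z\mapsto z^+$. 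Since $\na\psi_i\in L^\infty(\dom)$, the products $(\widetilde u_i^N)^+\na\psi_i$ converge strongly in $L^2(0,T;L^2(\dom))$ and can be paired against the weakly convergent $\na\widetilde u_j^N$, while the factor $\big((\widetilde u_i^N)^+-\widetilde u_i^+\big)\na\psi_i$ converges strongly and is paired with the fixed $\na\widetilde u_j\in L^2(0,T;L^2(\dom))$. This yields a.s.\ convergence of the inner space-time integrals, and Corollary \ref{coro.est} together with Lemma \ref{lem.higher} provides uniform integrability of the outer integrand in $\omega$, so Vitali's theorem closes the expectation.

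The third limit is the hardest, since the classical It\^o isometry argument handles changing integrands but not changing integrators. I would split
\begin{align*}
  \sum_{j=1}^n\int_0^t\sigma_{ij}(\widetilde u^N)\dd\widetilde W_j^N - \sum_{j=1}^n\int_0^t\sigma_{ij}(\widetilde u)\dd\widetilde W_j
\end{align*}
and invoke the standard convergence lemma for stochastic integrals with respect to a varying cylindrical Wiener process. Under Hypothesis (H4), the Lipschitz estimate
$\|\sigma_{ij}(\widetilde u^N)-\sigma_{ij}(\widetilde u)\|_{\mathcal{L}_2(U;L^2(\dom))}\le C_\sigma\|\widetilde u^N-\widetilde u\|_{L^2(\dom)}$ together with the strong $L^2$-convergence gives $\sigma_{ij}(\widetilde u^N)\to\sigma_{ij}(\widetilde u)$ in $L^2(0,T;\mathcal{L}_2(U;L^2(\dom)))$ in probability, while the linear-growth bound combined with Lemma \ref{lem.higher} supplies the uniform second moment needed to upgrade this to convergence in probability of the stochastic integrals in $C^0([0,T];L^2(\dom))$. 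Testing against $\phi_i$, squaring, and using the $L^p$-bound with $p>2$ to obtain uniform integrability over $\widetilde\Omega\times(0,T)$ yields the stated limit. The underlying lemma is proved by truncating the series $\widetilde W_j^N=\sum_k\widetilde\beta_{jk}^N J(\eta_k)$: on a finite truncation the integral reduces to an elementary expression in the Brownian components $\widetilde\beta_{jk}^N$ that converge jointly with the integrands by the Skorokhod--Jakubowski construction, and the tail is controlled uniformly in $N$ by the Hilbert--Schmidt norm of $\sigma_{ij}(\widetilde u^N)$.
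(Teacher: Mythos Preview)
Your proposal is correct and follows essentially the same route as the paper: the paper defers all but the nonlinear drift term to \cite[Lemma~16]{DJZ19} and then handles $(\widetilde u_i^N)^+\na\widetilde u_j^N$ by the same strong-times-weak pairing against $\na\psi_i\in L^\infty$ followed by Vitali's theorem, with only a cosmetic difference in which factor carries the index $N$ in the algebraic split. Your sketch of the stochastic-integral limit (Lipschitz convergence of $\sigma_{ij}$ in $L^2(0,T;\mathcal L_2)$, varying-integrator lemma via series truncation, uniform integrability from the $p>2$ moment bound) is precisely the argument that the reference \cite{DJZ19} carries out.
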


\begin{proof}
The proof of this lemma is very similar to that one for \cite[Lemma 16]{DJZ19},
using the previous convergence results and Vitali's convergence theorem. 
The only difference is the convergence in the nonlinear terms
$(\widetilde{u}_i^N)^+\na\widetilde{u}_j^N$. In fact, the strong
convergence of $(\widetilde{u}_i^N)^+$ in $L^2(0,T;L^2(\dom))$
and the weak convergence of $\na \widetilde{u}_j^N$ in $L^2(0,T;H^1(\dom))$ 
imply that
\begin{align*}
  \bigg|\int_0^t&\int_\dom\big((\widetilde{u}_i^N)^+(s)\na\widetilde{u}_j^N(s)
	- \widetilde u_i^+(s)\na\widetilde u_j(s)\big)\cdot\na\psi_i\dd x\dd s\bigg| \\
	&\le \|(\widetilde{u}_i^N)^+-\widetilde{u}_i^+\|_{L^2(0,t;L^2(\dom)}
	\|\widetilde{u}_j^N\|_{L^2(0,t;H^1(\dom))}\|\psi_i\|_{W^{1,\infty}(\dom)} \\
	&\phantom{xx}{}+ \bigg|\int_0^t\int_\dom(\widetilde{u}_i^N)^+(s)
	\na(\widetilde{u}_j^N-\widetilde{u}_i)(s)\cdot\na\psi_i\dd x\dd s\bigg|\to 0
	\quad\Prob\mbox{-a.s. as }N\to\infty.
\end{align*}
Since $(\widetilde{u}_i^N)^+\na\widetilde{u}_j^N$ is uniformly bounded,
Vitali's convergence theorem gives
$$
  \lim_{N\to\infty}\widetilde{\E}\int_0^T\bigg|\int_0^t\int_\dom\big(
	(\widetilde{u}_i^N)^+(s)\na\widetilde{u}_j^N(s)
	-\widetilde{u}_i^+(s)\na\widetilde{u}_j(s)
	\big)\cdot\na\psi_i\dd x\dd s\bigg|\dd t = 0.
$$
This finishes the proof.
\end{proof}

We consider the maps
$L_i^N$, $L_i:Z_T\times C^0([0,T];U_0)\times H_\nu^3(\dom)\to 
L^1(\widetilde\Omega\times(0,T))$ for $i=1,\ldots,n$, defined by
\begin{align*}
  L_i^N(\widetilde{u}^N,\widetilde{W}^N,\psi)(t)
	&= (\Pi_N(\widetilde{u}_i(0)),\psi)_{L^2(\dom)} \\
	&\phantom{xx}{}- \int_0^t\int_\dom\Pi_N\bigg(\delta\na\widetilde{u}_i^N(s)
	+ \sum_{j=1}^n a_{ij}(\widetilde{u}_i^N)^+(s)\na\widetilde{u}_j^N(s)\bigg)
	\cdot\na\psi_i\dd x\dd s \\
	&\phantom{xx}{}+ \sum_{j=1}^n\bigg(\int_0^t\Pi_N\sigma_{ij}(\widetilde{u}^N(s))
	\dd\widetilde{W}_j^N(s),\psi_i\bigg)_{L^2(\dom)}, \\
	L_i(\widetilde u,\widetilde W,\psi)(t)
	&= (\widetilde{u}_i(0),\psi_i)_{L^2(\dom)} \\
	&\phantom{xx}{}- \int_0^t\int_\dom\bigg(\delta\na\widetilde{u}_i(s)
	+ \sum_{j=1}^n a_{ij}\widetilde{u}_i^+(s)\na\widetilde{u}_j(s)\bigg)
	\cdot\na\psi_i\dd x\dd s \\
	&\phantom{xx}{}+ \sum_{j=1}^n\bigg(\int_0^t\sigma_{ij}(\widetilde{u}(s))
	\dd\widetilde{W}_j(s),\psi_i\bigg)_{L^2(\dom)}
\end{align*}
The previous lemma implies the following result.

\begin{corollary}\label{coro.lim}
It holds for any $\phi\in L^2(\dom;\R^n)$ and $\psi\in H^3_\nu(\dom;\R^n)$ that
\begin{align*}
  \lim_{N\to\infty}\|(\widetilde{u}^N,\phi)_{L^2(\dom)}
	- (\widetilde u,\phi)_{L^2(\dom)}\|_{L^2(\widetilde\Omega\times(0,T))} &= 0, \\
	\lim_{N\to\infty}\|L_i^N(\widetilde{u}^N,\widetilde{W}^N,\psi)
	- L_i(\widetilde{u},\widetilde W,\psi)\|_{L^1(\widetilde\Omega\times(0,T))} &= 0.
\end{align*}
\end{corollary}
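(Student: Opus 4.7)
The plan is to derive each assertion by combining the three convergence statements of the previous lemma with the uniform moment bounds of Corollary~\ref{coro.est} and Lemma~\ref{lem.higher}, applying Vitali's convergence theorem to upgrade the pointwise convergence on $\widetilde{\Omega}$ to convergence of expectations. No fundamentally new estimates are needed; the corollary is essentially a packaging of the previous lemma into the functional-analytic form required for identifying the martingale solution.

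For the first assertion, the definition of $Z_T$ gives the strong convergence $\widetilde{u}^N \to \widetilde u$ in $L^2(0,T;L^2(\dom))$, $\widetilde{\Prob}$-a.s., so Cauchy--Schwarz yields pathwise convergence of $(\widetilde{u}^N,\phi)_{L^2(\dom)}$ to $(\widetilde u,\phi)_{L^2(\dom)}$ in $L^2(0,T)$. To upgrade to convergence in $L^2(\widetilde{\Omega}\times(0,T))$, I would note that by Cauchy--Schwarz
$$
  \widetilde{\E}\int_0^T|(\widetilde{u}^N-\widetilde u,\phi)_{L^2(\dom)}|^p \dd t \le T\|\phi\|_{L^2(\dom)}^p\widetilde{\E}\sup_{0<t<T}\|\widetilde{u}^N-\widetilde u\|_{L^2(\dom)}^p \le C,
$$
for some $p>2$, using Lemma~\ref{lem.higher}. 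This bound gives uniform integrability of $|(\widetilde{u}^N-\widetilde u,\phi)_{L^2(\dom)}|^2$ on $\widetilde{\Omega}\times(0,T)$, and Vitali's theorem concludes.

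For the second assertion, I would split $L_i^N(\widetilde{u}^N,\widetilde{W}^N,\psi)-L_i(\widetilde u,\widetilde W,\psi)$ into three pieces: the initial-value term $(\Pi_N\widetilde u_i(0)-\widetilde u_i(0),\psi_i)_{L^2(\dom)}$, the deterministic integral, and the stochastic integral. To handle the projectors $\Pi_N$ appearing in $L_i^N$ but absent from $L_i$, I would use the $L^2(\dom)$-self-adjointness of $\Pi_N$ to transfer them onto the smooth test function $\psi$; then add and subtract $\Pi_N\psi$ and $\psi$, reducing everything to (i) the convergence statements of the previous lemma, (ii) the classical Galerkin approximation $\Pi_N\psi_i \to \psi_i$ in $H^1(\dom)$, which holds by the orthogonality of $(e_k)$ in $H^1(\dom)$ and the regularity $\psi\in H^3_\nu(\dom;\R^n)$, and (iii) the uniform bounds of Corollary~\ref{coro.est}. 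The deterministic integral then converges in $L^1(\widetilde{\Omega}\times(0,T))$ directly from the third statement of the previous lemma, while the stochastic integral converges even in $L^2(\widetilde{\Omega}\times(0,T))$ by the fourth statement, hence a fortiori in $L^1$ by H\"older's inequality. The initial-value term tends to zero in $L^2(\widetilde{\Omega})$ by $\Pi_N\psi_i\to\psi_i$ in $L^2(\dom)$ together with the uniform bound on $\widetilde{\E}\|\widetilde u_i(0)\|_{L^2(\dom)}^2$.

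The only real obstacle is the bookkeeping with the projections $\Pi_N$, which are present on one side but not the other; the self-adjointness transfer resolves this cleanly once the basis regularity is invoked. Beyond that, the work is entirely mechanical, and the result follows from assembling the lemma's statements with Vitali and H\"older.
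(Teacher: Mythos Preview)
Your proposal is correct and is essentially a fleshed-out version of the paper's own argument, which simply states that the corollary follows from the preceding lemma. The only place you go beyond the paper is in explicitly handling the projections $\Pi_N$ in $L_i^N$ via self-adjointness and $\Pi_N\psi\to\psi$ in $H^1(\dom)$; the paper glosses over this bookkeeping, but your treatment of it is the natural one.
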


\subsection{End of the proof}\label{sec.end}

Since $u^N$ is a strong solution to \eqref{2.eqN}--\eqref{2.icN}, we have
$$
  (u_i^N(t),\psi)_{L^2(\dom)} = L_i^N(u^N,W,\psi)(t) \quad\Prob\mbox{-a.s.}
$$
for all $t\in[0,T]$, $\psi\in H^1(\dom)$, and $i=1,\ldots,n$. In particular,
$$
  \E\int_0^T\big|(u_i^N(t),\psi)_{L^2(\dom)} - L_i^N(u^N,W,\psi)(t)\big|\dd t = 0.
$$
As the laws of $(u^N,W)$ and $(\widetilde{u}^N,\widetilde{W}^N)$ are the same,
$$
  \widetilde{\E}\int_0^T\big|(\widetilde{u}_i^N(t),\psi)_{L^2(\dom)} 
	- L_i^N(\widetilde{u}^N,\widetilde{W}^N,\psi)(t)\big|\dd t = 0.
$$
By Corollary \ref{coro.lim}, we can pass to the limit $N\to\infty$, yielding
$$
  \widetilde{\E}\int_0^T\big|(\widetilde{u}_i(t),\psi)_{L^2(\dom)} 
	- L_i(\widetilde{u},\widetilde{W},\psi)(t)\big|\dd t = 0
$$
for all $\psi\in H_\nu^3(\dom)$. 
Since the space $H_\nu^3(\dom)$ is dense in $H^1(\dom)$,
this identity also holds for all $\psi\in H^1(\dom)$. This means that
$(\widetilde{u}_i(t),\psi)_{L^2(\dom)} = L_i(\widetilde{u},\widetilde{W},\psi)(t)$
$\Prob$-a.s.\ for a.e.\ $t\in(0,T)$, for all $\psi\in H^1(\dom)$ and $i=1,\ldots,n$.
Thus, setting $\widetilde V=(\widetilde\Omega,\widetilde\F,\widetilde{\mathbb{F}},
\widetilde{\Prob})$, we conclude that $(\widetilde V,\widetilde u,\widetilde W)$
is a martingale solution to \eqref{1.eq}--\eqref{1.bic}. 

It remains to verify that $\widetilde u$ is nonnegative componentwise in
$\dom\times(0,T)$ $\Prob$-a.s. This follows from the stochastic Stampacchia
truncation argument developed in \cite{CPT16}. The idea is to approximate
the test function $g(z)=z^-:=\max\{0,-z\}$ for $z\in\R$ by a smooth function
$g_\eps$ with linear growth and to apply It\^o's formula
to $G_\eps(v)=\int_\dom g_\eps(v(x))^2\dd x$.
It is proved in \cite[Section 2.6]{DJZ19} 
that the limit $N\to\infty$ and then $\eps\to 0$ leads to
\begin{align*}
  \E\|\widetilde u_i^-(t)\|_{L^2(\dom)}^2 &\le \E\|(u_i^0)^-\|_{L^2(\dom)}^2
	+ \sum_{j=1}^n\E\int_0^t\|\sigma_{ij}(-\widetilde u_i^-(s))
	\|_{\mathcal{L}_2(U;L^2(\dom))}^2\dd s \\
	&\le C_\sigma\sum_{j=1}^n\E\int_0^t
	\|\widetilde u_i^-(s))\|_{L^2(\dom)}^2\dd s,
\end{align*}
where we used that $\widetilde u_i^+\na\widetilde u_i^-=0$.
It follows from Gronwall's lemma that 
$\E\|\widetilde u_i^-(t)\|_{L^2(\dom)}^2=0$ for
$t\in(0,T)$ and consequently $\widetilde u_i(t)\ge 0$ 
in $\dom$ $\Prob$-a.s.\ for a.e.\ $t\in(0,T)$ and all $i=1,\ldots,n$. 


\section{Proof of Theorem \ref{thm.time}}\label{sec.time}

We apply It\^o's lemma \cite[Theorem 4.2.5]{LiRo15} 
to the process $t\mapsto e^{\eta t/2}(PA)^{1/2}(u^N-\bar{u}^N)(t)$, where $\eta>0$
will be determined later and we set
$\bar{u}^N(t):=|\dom|^{-1}\int_\dom u^N(x,t)\dd x$.
Recall that $H(u^N|\bar{u}^N)=\frac12\|(PA)^{-1/2}(u^N-\bar{u}^N)\|_{L^2(\dom)}^2$. 
Then $\bar u^N$ solves
$$
  \dd\bar{u}_i^N(t) = \sum_{j=1}^n\bar{\sigma}_{ij}(u^N(t))\dd W_j(t), \quad
	\bar{\sigma}_{ij}(u^N) = \frac{1}{|\dom|}
	\int_\dom\sigma(u^N)\dd x.
$$
We compute
\begin{align}\label{3.Hrel}
  e^{\eta t}&H(u^N(t)|\bar{u}^N(t)) - H(u^N(0)|\bar{u}^N(0)) - \eta\int_0^t e^{\eta s}
	H(u(s)|\bar{u}^N(s))\dd s \\
	&= -\delta\sum_{i,j=1}^n\pi_i a_{ij}\int_0^t e^{\eta s}
	\int_\dom\na u_i^N(s)\cdot\na u_j^N(s)\dd x\dd s \nonumber \\
	&\phantom{xx}{}- \sum_{i=1}^n\pi_i\int_0^t e^{\eta s}\int_\dom 
	(u_i^N)^+(s)|\na p_i(u^N(s))|^2\dd x\dd s \nonumber \\
	&\phantom{xx}{}+ \frac12\int_0^t e^{\eta s}\int_\dom\operatorname{Tr}
	\big[(\sigma(u^N)-\bar{\sigma}(u^N))\mathrm{D}^2_u H(u^N|\bar{u}^N)
	(\sigma(u^N)-\bar{\sigma}(u^N))
	 \big](s)\dd x\dd s \nonumber \\
	&\phantom{xx}{}+ \sum_{i,j,k=1}^n\pi_i a_{ij}\int_\dom\int_0^t e^{\eta s}
	(u_j^N-\bar{u}^N_j)(s)\big(\sigma_{ik}(u^N(s))-\bar{\sigma}_{ik}(u^N(s))\big)
	\dd W_k(s) \dd x. \nonumber
\end{align}
With $\lambda>0$ being the smallest eigenvalue of the positive definite matrix
$(\pi_i a_{ij})$, the first term on the right-hand side is estimated according to
$$
  -\delta\sum_{i,j=1}^n\pi_i a_{ij}\int_0^t e^{\eta s}
	\int_\dom\na u_i^N(s)\cdot\na u_j^N(s)\dd s\dd s 
	\ge -\delta\lambda\int_0^t e^{\eta s}\int_\dom|\na u^N(s)|^2\dd x\dd s.
$$
Then, taking the expecation in \eqref{3.Hrel}, neglecting the second term
on the right-hand side of \eqref{3.Hrel}, and observing that the stochastic
integral vanishes, we obtain
\begin{align*}
  e^{\eta t}&\E H(u(t)|\bar{u}(t)) - \E H(u(0)|\bar{u}(0)) - \eta\int_0^t e^{\eta s}
	\E H(u(s)|\bar{u}(s))\dd s \\
	&\le -\delta\lambda\int_0^t e^{\eta s}\int_\dom|\na u(s)|^2\dd x\dd s
	+ C\E\int_0^t e^{\eta s}\|(PA)^{1/2}(\sigma(u)-\bar{\sigma}(u))
	\|_{\mathcal{L}_2(U;L^2(\dom))}^2\dd s.
\end{align*}
We estimate the last term on the right-hand side:
\begin{align*}
  \|(P&A)^{1/2}(\sigma(u)-\bar{\sigma}(u))
	\|_{\mathcal{L}_2(U;L^2(\dom))}^2
	= \sum_{k=1}^\infty\big\|(PA)^{1/2}
	\big(\sigma(u)-\bar{\sigma}(u)
	\big)\eta_k\big\|_{L^2(\dom)}^2 \\
	&\le C\sum_{i,j=1}^n\sum_{k=1}^\infty
	\|\sigma_{ij}(u(s))\eta_k-\bar{\sigma}_{ij}(u(s))\eta_k\|_{L^2(\dom)}^2 \\
	&= C\sum_{i,j=1}^n\sum_{k=1}^\infty\int_\dom
	\bigg(\sigma_{ij}(u(s))\eta_k-\frac{1}{|\dom|}
	\int_\dom\sigma_{ij}(u(s))\eta_k\dd y\bigg)^2\dd x \\
	&= \frac{C}{|\dom|^2}\sum_{i,j=1}^n\sum_{k=1}^\infty
	\int_\dom\bigg(
	\int_\dom\big(\sigma_{ij}(u(x,s))\eta_k-\sigma_{ij}(u(s,y))\eta_k\big)
	\dd y\bigg)^2\dd x \\
	&\le \frac{C}{|\dom|}\sum_{i,j=1}^n\sum_{k=1}^\infty
	\int_\dom\int_\dom\big(\sigma_{ij}(u(x,s))\eta_k-\sigma_{ij}(u(s,y))\eta_k\big)^2
	\dd y\dd x,
\end{align*}
where we used the Cauchy--Schwarz inequality in the last step. 
Next, we add and subtract the space-independent term $\sigma_{ij}(\bar{u}(s))\eta_k$
and apply the triangle inequality:
\begin{align*}
  \|(P&A)^{1/2}(\sigma(u)-\bar{\sigma}(\bar{u}))
	\|_{\mathcal{L}_2(U;L^2(\dom))}^2 \\
	&\le \frac{C}{|\dom|}\sum_{i,j=1}^n\sum_{k=1}^\infty
	\int_\dom\int_\dom\big(\sigma_{ij}(u(x,s))\eta_k-\sigma_{ij}(\bar{u}(s))\eta_k
	\big)^2\dd y\dd x \\
	&\phantom{xx}{}+ \frac{C}{|\dom|}\sum_{i,j=1}^n\sum_{k=1}^\infty
	\int_\dom\int_\dom\big(\sigma_{ij}(u(y,s))\eta_k-\sigma_{ij}(\bar{u}(s))\eta_k
	\big)^2\dd y\dd x \\
	&= 2C\sum_{i,j=1}^n	\|\sigma_{ij}(u(s))-\sigma_{ij}(\bar{u}(s))
	\|_{\mathcal{L}_2(U;L^2(\dom))}^2 
	\le 2C_1C_\sigma^2\|u(s)-\bar{u}(s)\|_{L^2(\dom)}^2.
\end{align*}
Thus, by the Poincar\'e--Wirtinger inequality with constant $C_P>0$,
\begin{align*}
  e^{\eta t}& \E H(u(t)|\bar{u}(t)) - \E H(u(0)|\bar{u}(0)) \\
	&\le \bigg(C(\lambda)\eta + 2C_1C_\sigma^2 - \frac{\delta\lambda}{C_P^2}\bigg)
	\int_0^t e^{\eta s}\int_\dom\|(u-\bar{u})(s)\|_{L^2(\dom)}^2\dd s. 
\end{align*}
Without loss of generality, we may choose $C_1\ge 1/2$. We set
$c_0^2:=\delta\lambda/(2C_1C_P^2)$, choose $0<C_\sigma<c_0$, and set
$\eta:=(c_0^2-C_\sigma^2)/C(\lambda)$. Then
$C(\lambda)\eta + 2C_1C_\sigma^2 - \delta\lambda/C_P^2 
= (1-2C_1)(c_0^2-C_\sigma^2) < 0$,
and we end the proof.


\section{Numerical illustration}

We discretize \eqref{1.eq} for $n\ge 2$ species
by a semi-implicit Euler--Maruyama scheme and 
centered finite differences in one space dimension. 
Let $\dom=(0,1)$, $J\in\N$, $\Delta x=1/J$, $\Delta t>0$,
$x_j=j\Delta x$ for $j=0,\ldots,J$, and
$t_k=k\Delta t$ for $k\in\N_0$. We approximate $u_i(x_j,t_k,\cdot)$ by
$u_{ij}^k$, solving
$$
  u_{ij}^{k+1} = (I+\delta\Delta t A_\Delta)^{-1}\bigg(u_{ij}^k 
	+ \frac{\Delta t}{\Delta x}(F_{i,j+1/2}^k-F_{i,j-1/2}^k)
	+ \sum_{j=1}^n\sigma_{ij}(u^k)\Delta W_j^k\bigg),
$$
where the centered discrete fluxes are given by
$$
  F_{i,j+1/2}^k = \frac{1}{2\Delta x}(u_{i,j+1}^k+u_{ij}^k)\sum_{\ell=1}^n a_{i\ell}
	(u_{\ell,j+1}^k-u_{\ell,j}^k),
$$
and we have set $u^k=(u_{ij}^k)_{i=1,2,\,j=0,\ldots,J}$. 
The matrix $A_\Delta\in\R^{(J+1)\times(J+1)}$ is associated to 
the discrete Laplacian
with Neumann conditions, and $\Delta W_j^k = W_j(t_{k+1})-W_j(t_k)$.

First, we choose the initial data
$$
  u_1^0(x) = \mathrm{1}_{[0,1/2]}(x), \quad 
	u_2^0(x) = 10x^2\bigg(\frac12-\frac{x}{3}\bigg)\quad\mbox{for }x\in[0,1],
$$
the diffusion parameter $\delta=1$,
the coefficients $a_{11}=a_{22}=2$, $a_{12}=a_{21}=1$, and the
stochastic diffusion $\sigma_{ij}(u)=0.001\sqrt{1+u_i}\delta_{ij}$ for $i,j=1,2$.
Observe that the matrix $(a_{ij})$ is symmetric positive definite.
The $\ell^2$ errors of $(u_1,u_2)$ 
versus time step size $\Delta t$ (with $\Delta x=0.02$ and $2^{11}$ samples)
and space grid size $\Delta x$ 
(with $\Delta t=10^{-5}$ and $2^{11}$ samples) at time $t=1$
are presented in Figure \ref{fig.error}. For the reference solution, we have
chosen $\Delta t=5\cdot 10^{-5}$, $\Delta x=0.02$ (left figure) and
$\Delta t=10^{-5}$ and $\Delta x=1/128$ (right figure).
As expected, we observe a
half-order convergence in time and first-order convergence in space.

\begin{figure}[ht]
\includegraphics[width=0.48\textwidth]{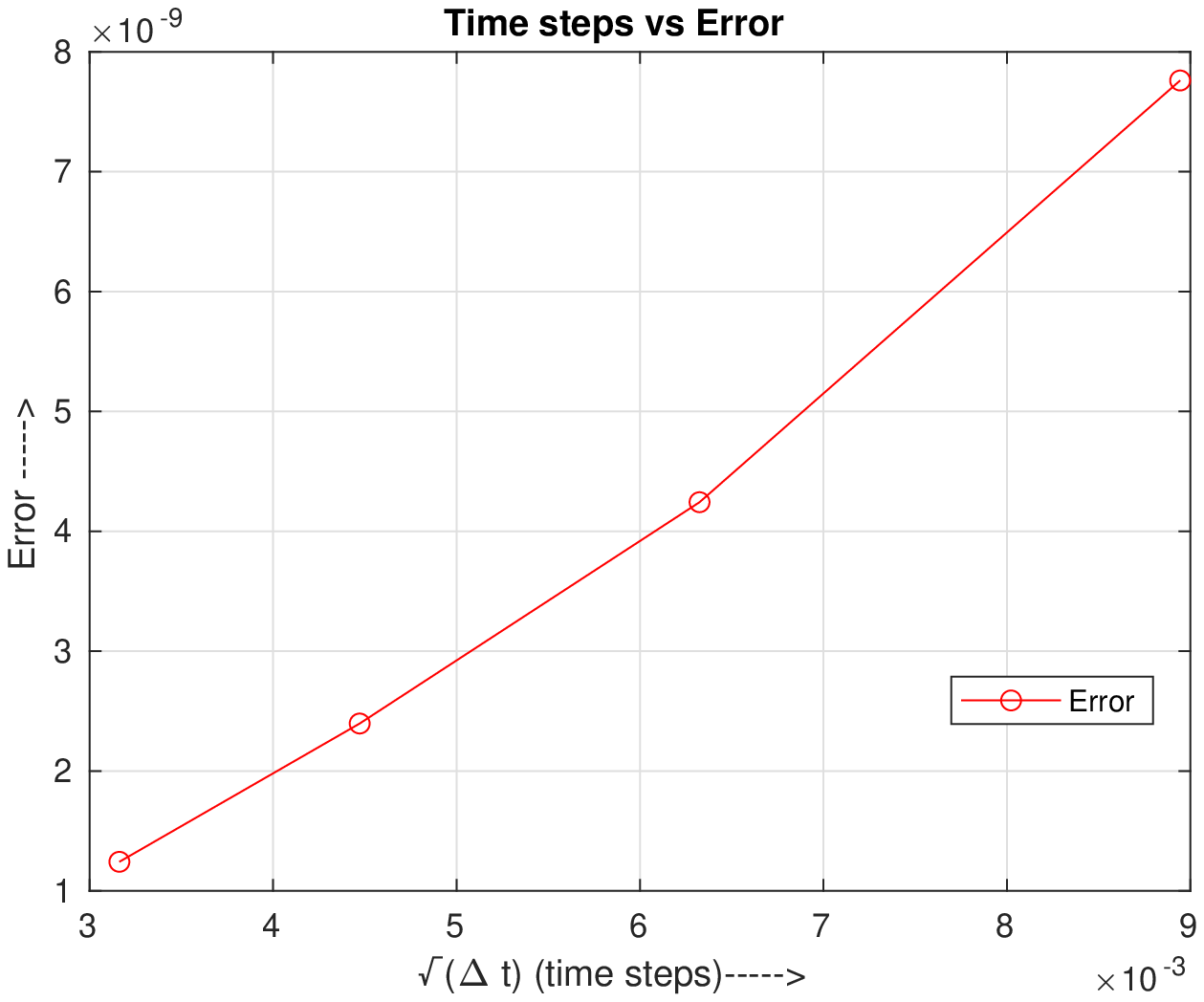}
\includegraphics[width=0.48\textwidth]{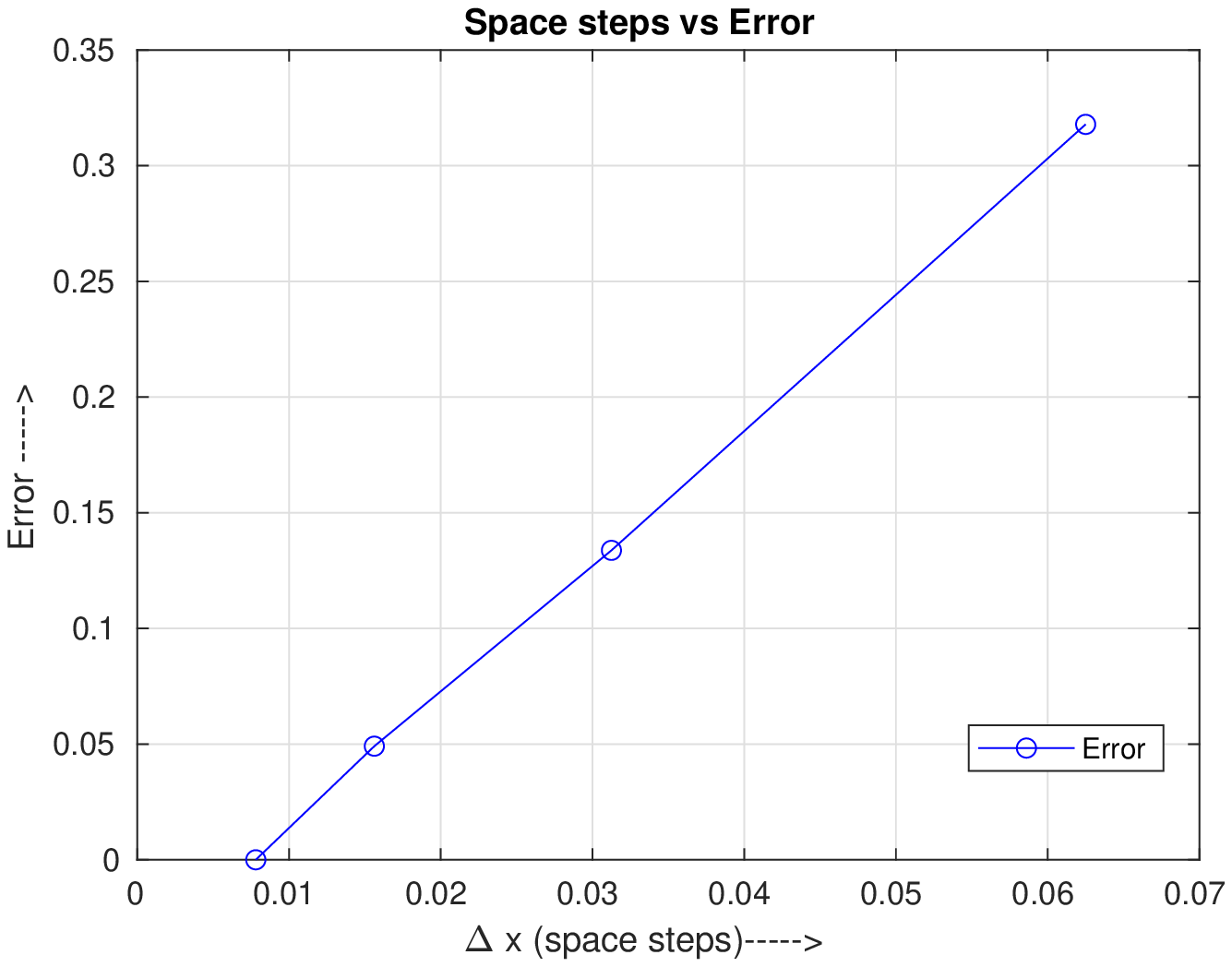}
\caption{Discrete $\ell^2$ error of $(u_1,u_2)$ versus time step size (left) and space
grid size (right). Observe that the $x$-axis of the left figure is scaled
with $\sqrt{\Delta t}$.}
\label{fig.error}
\end{figure}

Next, we present the long-time behavior of the discrete $\ell^2$ norms of
the densities in the three-species case. 
The parameters are $\delta=1$, $a_{12}=a_{23}=a_{31}=1$ and
$a_{ij}=0$ else, $\sigma_{ij}=0.1\sqrt{1+u_i}\delta_{ij}$ for $i,j=1,2,3$.
Notice that the matrix $(a_{ij})$ does not satisfy the detailed-balance
condition such that, strictly speaking, Theorem \ref{thm.time} is not applicable.
The time step is $\Delta t=10^{-4}$ and the grid size $\Delta x=0.02$.
Figure \ref{fig.n3} shows the time evolution of the $\ell^2$ norms of
$u_i$ in the deterministic case, while the right figure illustrates
the dynamics of the $\ell^2$ norms in the stochastic situation (with $2^{11}$ samples).
We observe that the $\ell^2$ norms, which are equivalent
to the Rao entropy, converge (in expectation) as $t\to\infty$,
and the behavior is similar in both cases.

\begin{figure}[ht]
\includegraphics[width=0.48\textwidth]{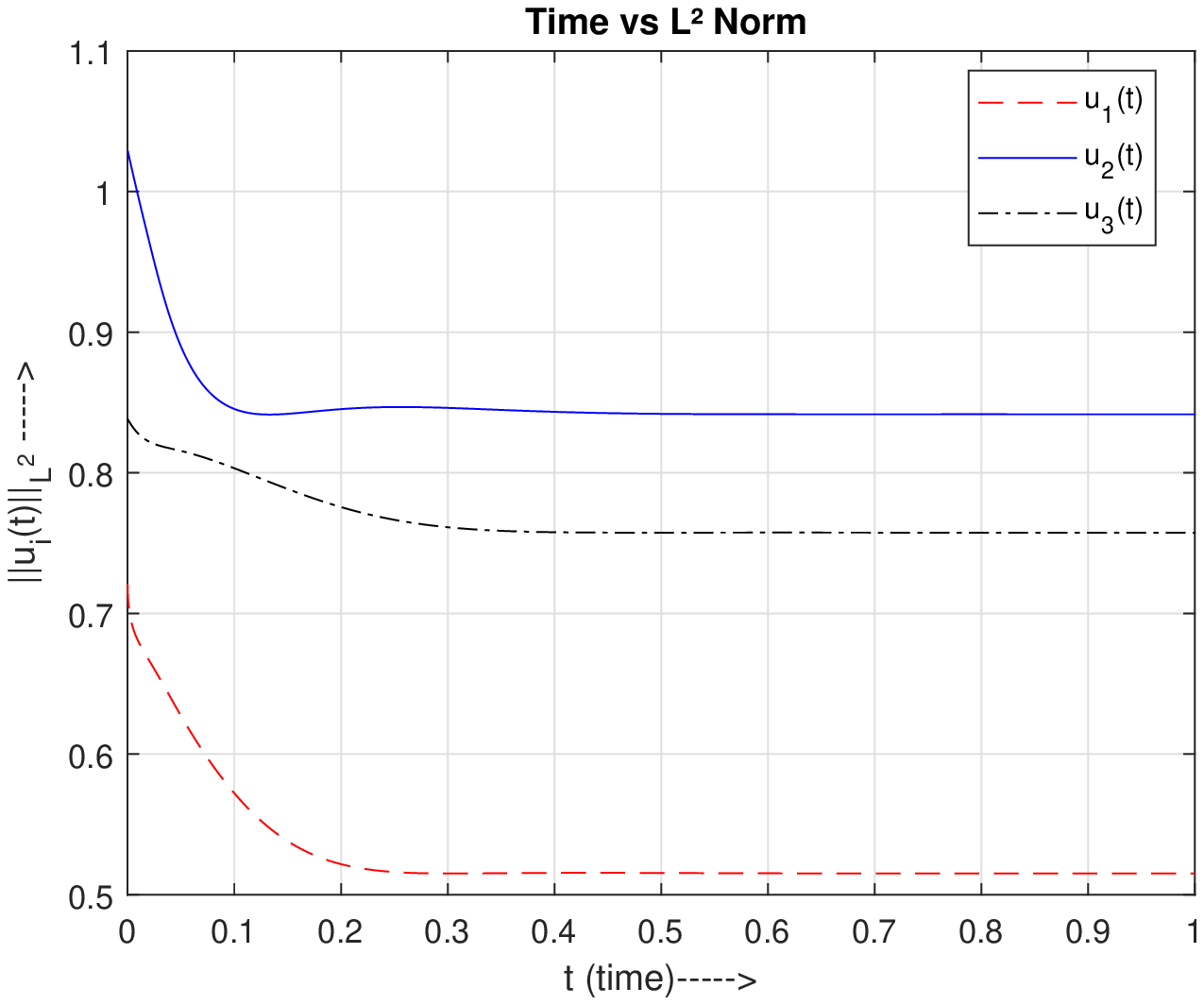}
\includegraphics[width=0.48\textwidth]{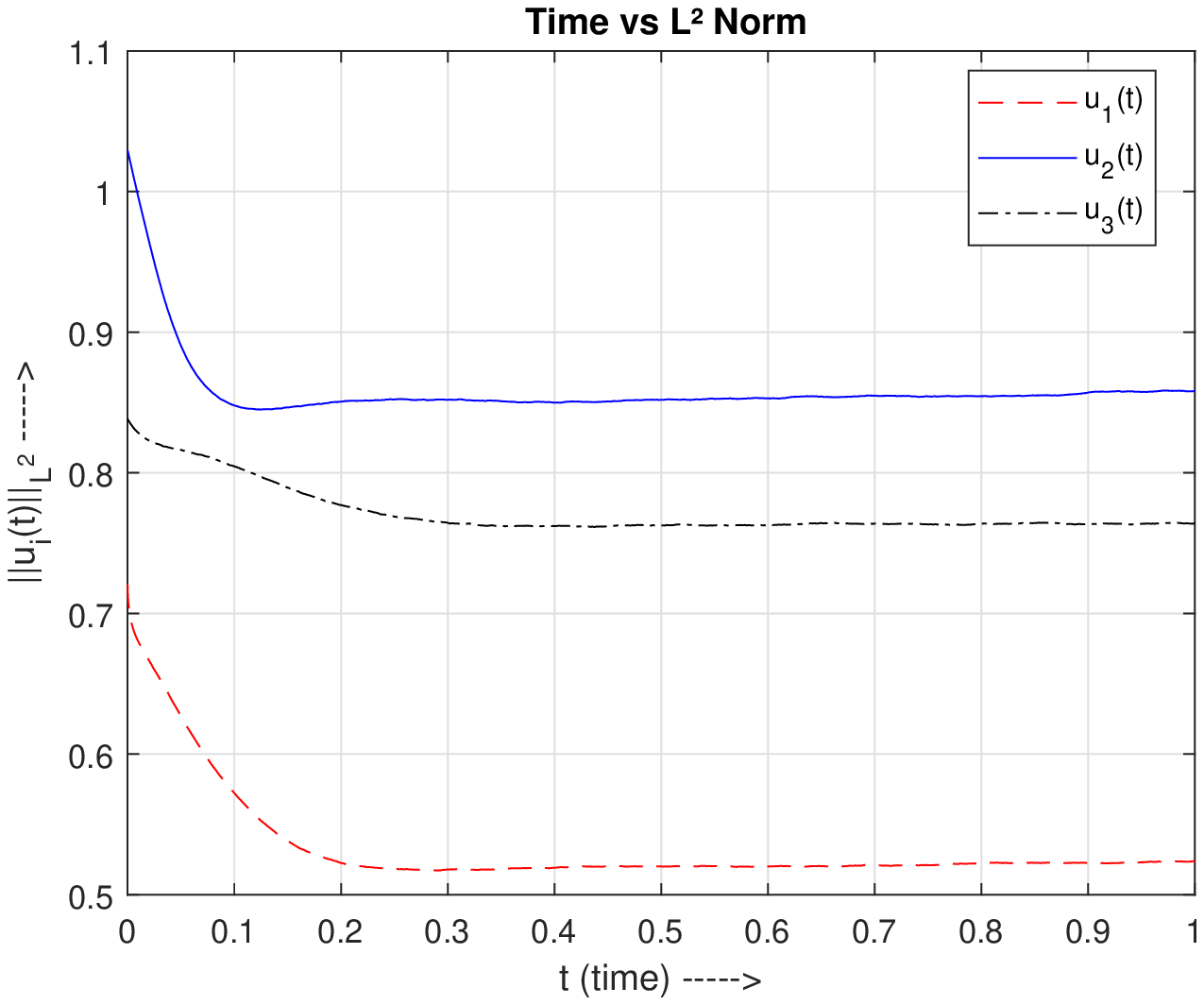}
\caption{Long-time behavior of the $\ell^2$ norms of $u_i$ in the deterministic
(left) and stochastic (right) case.}
\label{fig.n3}
\end{figure}


\end{document}